\tikzset{every picture/.append style={remember picture}}
\begin{document}
\title{Vanishing Theorems for the de Rham Complex of unitary local system}
\author{Hongshan Li}
\address{
  Department of Mathematics \\
  Purdue University \\
  West Lafayette, IN 47906 \\
  U.S.A
}

\maketitle
\long\def\/*#1*/{}

\begin{abstract}
We will prove a Kodaira-Nakano type of vanishing theorem for
the logarithmic de Rham complex of unitary local system. We 
will then study the weight filtration on the logarithmic de 
Rham complex, and prove a stronger statement for the associated
graded complex.
\end{abstract}

\section*{Introduction}
Let $X$ be a smooth projective variety of dimension $n$ 
over $\complex$ and $D \subset X$
a simple normal crossing divisor.
In \cite{Deligne70},
Deligne constructed the canonical extension $E$ for any local system 
$\mathcal{L}$ defined on $U$ (over complex topology). $E$ is equipped
with a flat connection, 
\[
  \nabla: E \rightarrow E\otimes \Omega_X(\log D)
\]
and it is characterized by the 
following two properties
\begin{enumerate}
\item The flat sections of $\nabla$ coincide with $\mathcal{L}$ 
on $U$.
\item The eigenvalues of the residue of $\nabla$ lie in the strip
\[
  \{z \in \complex | 0 \le \text{Re}(z) < 1 \}
\]
\end{enumerate}

Let $V$ be a unitary local system 
on $U \definedas X - D$ (over complex topology). Let $(E, \nabla)$ 
be the canonical extension of $V$. Write $\DR_X(D, E)$ for the  
de Rham complex 
\[
  E \xrightarrow{\nabla} E\otimes\Omega_X(\log D) 
    \rightarrow \cdots 
    \xrightarrow{\nabla} E\otimes\Omega^n_X(\log D)
\]

First, we will prove a Kodaira-Nakano type of vanishing theorem
\begin{theorem}
Let $L$ be an ample line bundle on $X$, then 
\[
  H^q(X, E\otimes \Omega^p_X(\log D)\otimes L) = 0
\]
for $p + q > \dim X$.
\end{theorem}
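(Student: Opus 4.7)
The plan is to follow Timmerscheidt's approach (as developed in Esnault--Viehweg's \emph{Lectures on Vanishing Theorems}), which extends the Kodaira--Nakano theorem to the logarithmic setting with coefficients in the canonical extension of a unitary local system. The key analytic input is that the flat unitary metric on $V$ extends to a Hermitian metric on $E$ with polylogarithmic growth along $D$; the condition that residue eigenvalues lie in $[0,1)$, together with the fact that unitarity forces these eigenvalues to be real, matches this growth to the $L^{2}$ condition exactly.

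First I would equip $X$ with a K\"ahler metric of Poincar\'e type along $D$ (so that its restriction to $U$ is complete), a smooth Hermitian metric $h_L$ on $L$ with strictly positive curvature, and the flat metric $h_E$ on $E|_{U}$. These choices produce an $L^{2}$--Dolbeault complex of $E \otimes L$-valued $(p,q)$-forms on $U$. The first technical step is a Dolbeault-type isomorphism identifying its cohomology in bidegree $(p,q)$ with $H^{q}(X, E \otimes \Omega^p_X(\log D) \otimes L)$. This rests on a local $L^{2}$ Poincar\'e lemma near $D$, for which the unitary hypothesis is essential: real residue eigenvalues in $[0,1)$ are exactly what identify $L^{2}$ local sections of $E$ with sections of the algebraic canonical extension.

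Next I would invoke the Nakano--Akizuki--Kodaira identity $\Delta'' - \Delta' = [i\Theta_{h_L}(L), \Lambda]$, which is valid on $E \otimes L$-valued forms because $h_E$ is flat and contributes no curvature term; only the curvature of $L$ survives. For bidegrees $(p,q)$ with $p + q > n$, the operator $[i\Theta_{h_L}(L), \Lambda]$ is pointwise positive on $(p,q)$-forms, so pairing it with any $L^{2}$-harmonic form of this bidegree forces the form to vanish. Combined with the Dolbeault-type identification of the previous step, this yields the asserted vanishing of $H^{q}(X, E \otimes \Omega^p_X(\log D) \otimes L)$.

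The main obstacle will be the analytic groundwork of the first step: proving an $L^{2}$ Poincar\'e lemma in the presence of logarithmic poles, verifying that the strong and weak closed extensions of $\bar{\partial}$ agree so that Hodge theory is applicable on the complete noncompact manifold $U$, and checking that harmonic representatives exist in each cohomology class. The unitary assumption is indispensable throughout: for non-unitary local systems the residue eigenvalues are generically complex, and the $L^{2}$ cohomology would no longer correspond to the algebraic canonical extension, so both the Dolbeault isomorphism and the vanishing of the curvature term for $E$ would fail.
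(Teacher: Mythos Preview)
Your proposal outlines a valid analytic proof via $L^{2}$ Hodge theory on $U=X\setminus D$, and the ingredients you list (Poincar\'e-type metric, $L^{2}$ Dolbeault lemma matching the canonical extension, Bochner--Kodaira--Nakano identity with only the curvature of $L$ surviving) do assemble into a correct argument. However, this is \emph{not} the route the paper takes, and your attribution is slightly off: Timmerscheidt's contribution in the Esnault--Viehweg appendix is the mixed Hodge structure on $\DR_X(D,E)$, and it is exploited algebraically rather than through $L^{2}$ estimates.

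The paper's argument is purely cohomological. For $L$ very ample, choose a smooth $B\in|L|$ transverse to $D$ and use the exact sequence
\[
0 \to \Omega^p_X(\log(D+B))\otimes E \to \Omega^p_X(\log D)\otimes E\otimes\mathcal O_X(B) \to \Omega^p_B(\log(D\cap B))\otimes E_B\otimes\mathcal O_B(B) \to 0.
\]
The middle term is the one of interest. The left term vanishes in the stated range because the Hodge-to-de Rham spectral sequence of $\DR_X(D+B,E)$ degenerates at $E_1$ (this is where the CMHC structure, or the trick $V\oplus\bar V$ for the non-real case, is used) and its hypercohomology is $H^\ast(X\setminus(D+B),V^o)$, which vanishes above degree $n$ by Artin's theorem since $X\setminus(D+B)$ is affine. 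The right term is handled by induction on $\dim X$, after checking that $E_B$ is again a canonical extension. Finally, the passage from very ample to ample is the standard Kawamata--Viehweg cyclic covering trick.

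Your analytic approach is more direct and makes the role of unitarity transparent (flat metric $\Rightarrow$ no curvature contribution from $E$), but it front-loads substantial analysis: the $L^{2}$ Poincar\'e lemma near $D$, equality of minimal and maximal $\bar\partial$-extensions, and existence of harmonic representatives on the complete open manifold. The paper's approach avoids all of this by reducing to two black boxes (spectral sequence degeneration from mixed Hodge theory, and Artin vanishing), at the cost of the auxiliary divisor $B$, an induction, and a covering construction. Both are standard; they simply trade analytic input for Hodge-theoretic and topological input.
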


The de Rham complex $\DR_X(D, E)$ comes with an decreasing filtration 
$F^.$ (Hodge filtration or "naive" filtration) 
and a increasing filtration $W_.$ (weight filtration).
The weight filtration $W_.$ will be defined in Section 
\ref{sect:weightfiltration}. $F^.$ and $W_.$ togethe will define
a mixed Hodge structure on $\DR_X(D, E)$. 

Then, we will prove a more refined version of the above theorem
\begin{theorem}
Let $L$ be an ample line bundle on $X$, then
\[
  H^q(X, \Gr^W_.(\Omega_X^p\otimes E)\otimes L) = 0
\]
\end{theorem}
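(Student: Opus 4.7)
The plan is to view Theorem~2 (understood with the natural range $p+q>\dim X$ inherited from the first theorem) as the result of applying the first theorem to each stratum of $D$, after identifying the associated graded for $W_\bullet$ with direct images of logarithmic de Rham sheaves on those strata.

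Write $D=D_1\cup\cdots\cup D_r$ with smooth components, $D_I=\bigcap_{i\in I}D_i$ with inclusion $a_I\colon D_I\hookrightarrow X$, and $D'_I=\bigcup_{j\notin I}(D_I\cap D_j)$ for the residual simple normal crossings divisor on $D_I$. The first step is to use the Poincaré residue, together with the definition of $W_\bullet$ from Section~\ref{sect:weightfiltration}, to obtain an isomorphism of the form
\[
  \Gr^W_k\bigl(\Omega^p_X(\log D)\otimes E\bigr)\;\cong\;\bigoplus_{|I|=k}(a_I)_*\!\left(\Omega^{p-k}_{D_I}(\log D'_I)\otimes E_I\right),
\]
where $E_I$ is the locally free sheaf on $D_I$ obtained from $E|_{D_I}$ by selecting the appropriate residue-eigenspace summand.

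The step I expect to be the main obstacle is verifying that each $E_I$ is the canonical extension of a \emph{unitary} local system $V_I$ on $D_I\setminus D'_I$. The local punctured fundamental group near a smooth point of $D_I$ is abelian, generated by loops about the transversal components $D_i$, $i\in I$, and unitarity of $V$ makes the corresponding commuting monodromies simultaneously unitarily diagonalizable. The simultaneous eigenspaces produce unitary sub-local-systems on neighborhoods of $D_I\setminus D'_I$, which glue to a unitary $V_I$; a local coordinate computation then matches the canonical extension of $V_I$ with the residue-eigenspace of $E|_{D_I}$ isolated above. Some care will be required because the residue eigenvalues are generally irrational, so one cannot reduce to a finite covering trick to trivialize the monodromy.

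With that in hand, the first theorem applied to the triple $(D_I, D'_I, V_I)$ together with the ample line bundle $L|_{D_I}$ (ampleness is preserved under restriction to a closed subscheme) yields
\[
  H^q\!\left(D_I,\,\Omega^{p-k}_{D_I}(\log D'_I)\otimes E_I\otimes L|_{D_I}\right)=0
  \qquad\text{for }(p-k)+q>n-k,
\]
i.e.\ for $p+q>n$. Since $a_I$ is a closed immersion, $(a_I)_*$ is exact and has no higher direct images, so cohomology is preserved; summing over $|I|=k$ and over $k$ gives the stated vanishing on $X$.
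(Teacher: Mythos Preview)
Your residue identification in the first display is incorrect, and this is where the argument breaks. By Proposition~\ref{prop:gradedComplex} the residue gives
\[
  \Gr^W_m\bigl(\Omega^p_X(\log D)\otimes E\bigr)\;\cong\;(v_m)_*\,W_0\bigl(\Omega^{p-m}_{\tilde D_m}(\log \tilde C_m)\otimes E_m\bigr),
\]
with $W_0$ on the right, not the full logarithmic sheaf. The point is that the weight filtration in this twisted setting counts only those $dz_j/z_j$ whose monodromy on the relevant rank-one summand of $V$ is trivial; log poles along components with nontrivial monodromy survive the $m$-fold residue but sit in weight zero. Hence Theorem~1 applied on $D_I$ gives vanishing for $\Omega^{p-k}_{D_I}(\log D'_I)\otimes E_I\otimes L|_{D_I}$, which is strictly larger than the sheaf you actually need, and does not by itself yield vanishing for its $W_0$ subsheaf.

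Your idea can be repaired with an extra induction on $\dim X$: for $m\ge 1$ the identification above places $\Gr^W_m$ on a lower-dimensional stratum, where the inductive hypothesis applies directly (since $W_0=\Gr^W_0$ there); then on $X$ itself one descends the weight filtration from the top piece $W_n$ (handled by Theorem~1) through the now-known $\Gr^W_m$, $m\ge 1$, down to $W_0=\Gr^W_0$. That would give a clean reduction of the second theorem to the first. The paper, however, does not argue this way: it proves the $W_0$-vanishing independently by introducing a \emph{partial} weight filtration $W^B_\bullet$ on $\DR_X(D+B,E)$ for a very ample $B$, showing that $W^B_0$ underlies a cohomological mixed Hodge complex (Theorem~\ref{thm:MHSPartialWeightFiltration}), and combining the resulting $E_1$-degeneration with Artin vanishing on the affine $X\setminus B$; a cyclic-cover trick then passes from very ample to ample. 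So even once corrected, your route is genuinely different from the paper's.
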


\textbf{Acknowledgement} The author is thankful for D. Arapura for explaining
mixed Hodge theory and many key suggestions.

\section{Residue map}
\label{sect:residuemap}
In this section we will define a residue map $\Res(E)$ on the complex $\DR_X(D, E)$.
Similar to the usual residue map on the holomorphic de Rham complex,
$\komplex \Omega_X$, $\Res(E)$ will define a weight filtration on $\DR_X(D, E)$.
$\Res(E)$ has been defined and studied in \cite{timmerscheidt87}.

For $m = 1, \cdots n$, 
let $D_m$ be the union of $m$-fold intersection of components of $D$;
Let $\tilde D_m$ be the disjoint union of components of $D_m$;
Let $v_m: \tilde D_m \rightarrow X$ be the composition of the projection map onto $D_m$
and the inclusion map. $\tilde C_m \definedas \pullback v_m D_{m+1}$ 
is either empty or a normal

\begin{theorem}\cite{timmerscheidt87}
\label{thm:localDes}
\begin{enumerate}
\item $V_m := \pushforward{j}V|_{D_m - D_{m+1}}$ 
is a unitary local system 
on $D_m - D_{m+1}$.
\item There exist a unique subvectorbundle $E_m$ of 
$\pullback v_m E$ and a unique
holomorphic integrable connection $\nabla_m$ on $E_m$ with 
logarithmic poles
along $C_{m}$ such that 
\[
  \ker\nabla_{m}|_{\tilde D_m - \tilde C_m} = \inverse{v_m}V_m
\]

\item There exists a unique subvectorbundle $E_m^*$ of 
$\pullback v_m E$ with
\[
  E_m \oplus E_m^* = \pullback v_m E
\]
\end{enumerate}
\end{theorem}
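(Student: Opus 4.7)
The plan is to reduce all three claims to a simultaneous-eigenspace decomposition that unitarity provides locally, and then to patch using the intrinsic description of $E_m$ as a joint kernel of residues. Fix $p \in D_m - D_{m+1}$ and polydisc coordinates $(z_1,\dots,z_n)$ near $p$ with $D = \{z_1 \cdots z_m = 0\}$. The local fundamental group of the complement is $\mathbb{Z}^m$, and the monodromies $T_1,\dots,T_m$ of $V$ around the $m$ branches are commuting unitary operators, hence simultaneously diagonalizable. This splits $V$ locally as $\bigoplus_{\chi} V_\chi$ indexed by the characters $\chi\colon \mathbb{Z}^m \to U(1)$ appearing in the monodromy, and correspondingly $E = \bigoplus_\chi E_\chi$, with the residue $R_i := \mathrm{Res}_{D_i}(\nabla)$ acting on $E_\chi$ by the real scalar $\alpha_i^\chi \in [0,1)$ determined by $\chi(\gamma_i) = e^{-2\pi i \alpha_i^\chi}$---reality being forced by unitarity together with Deligne's $[0,1)$-normalization.

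\textbf{Constructing $E_m$, $E_m^*$, $\nabla_m$.} I would define $E_m := \bigcap_{i} \ker(v_m^* R_i)$ componentwise on $\tilde D_m$ (intersecting residues along the $m$ components meeting at each component of $\tilde D_m$), and set $E_m^*$ to be the intrinsic complement $\sum_i \mathrm{im}(v_m^* R_i)$, i.e.\ the sum of joint eigenspaces of $(R_1,\dots,R_m)$ with at least one nonzero eigenvalue. Simultaneous diagonalizability then yields $v_m^* E = E_m \oplus E_m^*$, and the intrinsic descriptions secure uniqueness. Locally $E_m = \bigoplus_{\chi \text{ trivial}} E_\chi$, which has locally constant rank and is thus a subbundle. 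For the connection, the residue sequence
\[
  0 \to \Omega_{\tilde D_m}^1(\log \tilde C_m) \to v_m^* \Omega_X^1(\log D) \to \bigoplus_i \mathcal{O}_{\tilde D_m} \to 0
\]
identifies the obstruction to $\nabla$ descending to a log connection with poles only along $\tilde C_m$. On $E_m$ the residues vanish by construction, and because the character decomposition is $\nabla$-stable locally, $\nabla$ preserves $E_m$ and produces a holomorphic integrable connection $\nabla_m$ on $E_m$ with log poles only along $\tilde C_m$.

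\textbf{Identifying $V_m$ and concluding.} For part (1), the stalk of $j_* V$ at $p \in D_m - D_{m+1}$ is $V^{\pi_1} = \bigoplus_{\chi \text{ trivial}} V_\chi$; constancy of the local monodromy as $p$ moves along the stratum $D_m - D_{m+1}$ makes this rank locally constant, so $V_m$ is a local system and inherits a flat unitary metric from $V$. Local flat sections of $\nabla_m$ on $\tilde D_m - \tilde C_m$ are precisely the $\pi_1$-invariant sections of $V$, which gives $\ker \nabla_m = v_m^{-1} V_m$. Uniqueness of $(E_m,\nabla_m)$ then follows by recognizing it as the Deligne canonical extension of $v_m^{-1} V_m$ across $\tilde C_m$: the residue eigenvalues of $\nabla_m$ along $\tilde C_m$ again lie in $[0,1)$ by the same diagonalization, so Deligne's uniqueness theorem applies.

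\textbf{Main obstacle.} The main hurdle I expect is globalizing the local joint-eigenspace decomposition so that $E_m$ and $E_m^*$ are genuine subbundles of $v_m^* E$ rather than merely subsheaves. This reduces to showing that the spectrum of each $R_i$ is locally constant along $D_i$, which is precisely what unitarity buys via simultaneous diagonalizability of commuting unitary operators. Without unitarity, the residues need not be semisimple, $\ker(R_i)$ can drop rank at intersections, and there is no canonical complement $E_m^*$; so the entire structure hinges on this step.
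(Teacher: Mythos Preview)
Your proof is correct and follows essentially the same strategy as the paper: reduce to a polydisk, simultaneously diagonalize the commuting unitary monodromies to split $V$ locally into rank-one summands, and identify $E_m$ as the sum of those summands with trivial monodromy along the relevant branches, invoking Deligne's uniqueness for the canonical extension to pin down $(E_m,\nabla_m)$. The one minor difference is that you characterize $E_m^*$ intrinsically as $\sum_i \operatorname{im}(v_m^* R_i)$, whereas the paper takes $E_m^*$ to be the orthogonal complement of $E_m$ with respect to the flat Hermitian metric inherited from $V$; in the unitary situation these agree, and your residue description has the mild advantage of making the uniqueness of $E_m^*$ immediate.
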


\begin{proof}
All of the statements above are local. Therefore, we can assume
$X$ is a polydisk.
Write $X = \polydisk$, and let $z_i$ be the coordinate on 
$\Delta_i$. Suppose $D$ is defined by 
\[
  z_1\times\cdots\times z_s = 0
\]

\bigskip
1. 
The local system $V$ on $U$ is equivalent to an unitary 
representation
\[
  T: \pi_1(U) \rightarrow \GL(r, \complex)
\]
As $\pi_1(U)$ is abelian and $T$ is unitary, we can simultaneously
diagonalize all $T(\gamma_i$, where $\gamma_i$'s form a generating
set of $\pi_1(U)$ (see Appendix 1). Therefore, we can assume $V$
is a direct sum of rank 1 unitary local systems. Write
\[
  V = V^1\oplus\cdots \oplus V^r
\]
For each $V^i$, let $\lambda_{i,j}$ be its monodromy around $D^j$.
So $V^i$ extends to $D^j$ if and only if $\lambda_{i,j} = 1$. 

Now let $D^{j1}\cap\cdots\cap D^{jm}$ be one component of $D_m$,
and let $x \in D^{j1}\cap\cdots\cap D^{jm}$. Then, near $x$
$V_m$ is 
\[
  \bigoplus\limits_{\lambda_{i,j1}=\cdots=\lambda_{i,jm}=1}
  V^i
\]
This shows that $V_m$ is a unitary local system.

\bigskip
2. 
The uniqueness of the subvectorbundle 
$E_m$ follows from the uniqueness of
canonical connection. Therefore, we only need to show the 
existence part. 
Use the notation from part 1, 
and assume $V$ decomposes as direct sum of
rank 1 unitary local system $V^i$. 
Let $E^i$ be the canonical connection
of $V^i$. Then, it is clear that 
\[
  E_m = \bigoplus
  \limits_{\lambda_{i,j1}=\cdots=\lambda_{i, jm}=1}
  \pullback v_m E^i
\]

\bigskip
3. $E$ inheits a flat Hermitian form from $V$. 
Define $E_m^*$ as the complement
of $E_m$ with respect to this metric. 
On $\Delta$, $E_m^*$ is the direct sum of 
$\pullback v_m E^i$ not appearing in the definition
of $E_m$.
\end{proof}   

\begin{remark}
$E_m$ could have different ranks on different component
of $\tilde D_m$.
\end{remark}

For each $m \le p \le \dim D_m$, there exists a residue map
crossing divisor in $\tilde D_m$.
\[
  \Res_m: \logpform \rightarrow \pushforward{v_m}(\Omega_{\tilde D_m}^{p-m})
\]

\/*
This map is defined as follow:
Let $D_{m1}$ be one of components of $D_m$, and suppose $D_{m1}$ is the intersection of 
$D_{i1},\cdots,D_{im}$. Then, the map $\Res_m$ sends $dz_{i}/z_i$ to 1 if 
$i$ appears in $i_1, \cdots, i_m$, and $\Res_m$ sends all other 1-form to 0.
This map is well-defined independent of the chosen coordinate.
*/

$\Res_m$ commutes with exterior derivative $d$, 
making it a homomorphism
of complexes
\[
  \Res_m: \komplex{\Omega_X}(\log D) \rightarrow 
    \pushforward{v_m}\komplex{\Omega_{\tilde D_m}}(\log \tilde C_m)[-m]
\]

Consider the following variation of the residue map $\Res_m$
\begin{align*}
  \Res_m(E): \Omega_X^p(\log D)\otimes E & \xrightarrow{\Res_m\otimes\id}
    \pushforward {v_m}(\Omega_{\tilde D_m}^{p-m}(\log \tilde C_m))\otimes E \\
  & = \pushforward{v_m}(\Omega_{\tilde D_m}^{p-m}(\log \tilde C_m)\otimes 
      \pullback v_mE) \\
  & \xrightarrow{\id\otimes p_m}
    \pushforward {v_m}(\Omega_{\tilde D_m}^{p-m}(\log \tilde C_m)\otimes 
   E_m) \\
\end{align*}
where $p_m: \pullback v_m E \rightarrow \tilde E_m$
is the projection onto the $E_m$ component.

\begin{lemma}\cite{timmerscheidt87}
$\Res_m(E)\circ \nabla = \nabla_m \circ \Res_m(E)$, \emph{i.e.} $\Res_m(E)$ is 
homomorphism of complexs
\[
  \DR_X(D, E) \rightarrow 
  \pushforward {v_m} \DR_{\tilde D_m}(\tilde C_m, E_m)[-m]
\]
\end{lemma}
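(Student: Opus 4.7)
The plan is to verify the identity locally, using the framework from the proof of Theorem~\ref{thm:localDes}. Take $X = \polydisk$ with coordinates $z_1, \ldots, z_n$, $D = \{z_1 \cdots z_s = 0\}$, and fix a component $D^{j_1} \cap \cdots \cap D^{j_m}$ of $D_m$. After decomposing the unitary representation into characters, write $V = \bigoplus_i V^i$ with each $V^i$ of rank one, and correspondingly $E = \bigoplus_i E^i$. Each $E^i$ carries a local frame $\epsilon_i$ with $\nabla \epsilon_i = \alpha_i \otimes \epsilon_i$, where
\[
  \alpha_i = \sum_{j=1}^s \mu_{i,j} \frac{dz_j}{z_j}
\]
and the residues $\mu_{i,j} \in [0,1)$ satisfy $e^{-2\pi i \mu_{i,j}} = \lambda_{i,j}$. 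Let $I_m = \{i : \mu_{i,j_1} = \cdots = \mu_{i,j_m} = 0\}$; by parts~(2) and~(3) of Theorem~\ref{thm:localDes}, $E_m = \bigoplus_{i \in I_m} \pullback v_m E^i$, and $p_m$ is the projection onto these summands.

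Next, apply Leibniz to a generator $\omega = \eta \otimes \sum_i g_i \epsilon_i$ of $\Omega^p_X(\log D) \otimes E$:
\[
  \nabla \omega = \sum_i d(g_i \eta) \otimes \epsilon_i + (-1)^p \sum_i g_i \eta \wedge \alpha_i \otimes \epsilon_i,
\]
and push this through $(\id \otimes p_m) \circ (\Res_m \otimes \id)$. The projection $p_m$ discards the summands with $i \notin I_m$. For $i \in I_m$, the commutation of $\Res_m$ with $d$ on the plain logarithmic complex handles the first term, producing $d(\Res_m(g_i \eta))$. The second term is controlled by the key observation: for $i \in I_m$, the form $\alpha_i$ has no $\frac{dz_{j_k}}{z_{j_k}}$ component for $k = 1, \ldots, m$, so $\pullback v_m \alpha_i$ is a well-defined logarithmic $1$-form on $(\tilde D_m, \tilde C_m)$, and a direct coefficient computation from the definition of $\Res_m$ yields $\Res_m(\eta \wedge \alpha_i) = \pm\, \Res_m(\eta) \wedge \pullback v_m \alpha_i$. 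Comparing with the expansion of $\nabla_m$ applied to $\Res_m(\eta) \otimes \pullback v_m(g_i \epsilon_i)$ then gives the desired equality on local generators, and hence globally.

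The main obstacle is not conceptual but a matter of sign bookkeeping when extracting the residue of $\eta \wedge \alpha_i$ and rearranging the wedge product so that the $\frac{dz_{j_1}}{z_{j_1}} \wedge \cdots \wedge \frac{dz_{j_m}}{z_{j_m}}$ factor is displayed in front. The substance of the lemma is that the summands of $E$ whose connection forms carry genuine $\frac{dz_{j_k}}{z_{j_k}}$ poles along the chosen component of $D_m$ lie in $E_m^*$ and are projected away by $p_m$ \emph{before} those poles can influence the residue, while the summands in $E_m$ carry connection forms that already commute with $\Res_m$ in the sense above. This is precisely what distinguishes $\Res_m(E)$ from the naive tensor-product of $\Res_m$ with $\id_E$.
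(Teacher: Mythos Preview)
The paper does not supply a proof of this lemma; it simply cites \cite{timmerscheidt87} and moves on. Your local computation is the natural way to verify the statement and is correct in substance: reduce to a polydisk, split $V$ into rank-one summands as in Theorem~\ref{thm:localDes}, observe that the summands surviving the projection $p_m$ are exactly those whose connection one-form $\alpha_i$ has vanishing residue along each $D^{j_k}$, and then use that $\Res_m$ already commutes with $d$ on the untwisted logarithmic complex together with the elementary identity $\Res_m(\eta\wedge\alpha_i)=\Res_m(\eta)\wedge v_m^*\alpha_i$ for such $\alpha_i$. The only thing you leave implicit is the verification that the sign you pick up from reordering matches the sign built into the shift $[-m]$; since both sides of the claimed equality acquire the same reordering sign, this cancels and there is nothing further to check.
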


\section{Weight filtration on the de Rham Complex}
\label{sect:weightfiltration}
The residue map
\[
  \Res_m(E): \DR_X(D, E) \rightarrow 
  \pushforward {v_m}\DR(\tilde C_m, E_m, \nabla_m)[-m]
\]
can be used to define a weight filtration $W_.$ on $\DR_X(D, E)$
\cite{timmerscheidt87}

\begin{align*}
 & W_m(\DR_X(D, E)) = \ker\Res_{m+1}(E) & \text{ if } m \ge 0 \\
 & W_m(\DR_X(D, E)) = 0 & \text{ if } m < 0
\end{align*}

Local descriptions of $W_m(\DR_X(D, E))$ have been given in 
\cite{timmerscheidt87}. We will review them here:

Let $\Delta = \polydisk$ be a polydisk of $X$ with coordinate $z_1,\cdots,z_n$.
Suppose $D$ is defined as 
\[
  z_1 \times\cdots\times z_s = 0
\]
As in part 1 of Theorem \ref{thm:localDes}, we assume $V$ is the direct sum
of rank 1 unitary local systems on $\Delta$, and write
\[
  V = V^1\oplus \cdots \oplus V^r
\]

\begin{definition}
We say $\frac{dz_j}{z_j}$ acts on $V^i$ by identity if $\lambda_{i,j} = 1$, 
i.e. the monodromy of $V^i$ by a small circle around $D_j$ is the identity.
\end{definition}

Let $E^i$ be be canonical extension of $V^i$ on $\Delta$;

Let $\mu_i$ be a generator of $E^i$, then
\[
  \frac{dz_{j_1}}{z_{j_1}}\wedge\cdots\wedge\frac{dz_{j_k}}{z_{j_k}}
  \wedge dz_{j_{k+1}}\cdots\wedge dz_{j_p}\otimes \mu_i
\]
is in $W_m(\DR_X(D, E))$ if and only if there are at most 
$m$ log forms acting on
$V^i$ by identity.

\begin{proposition}\cite{timmerscheidt87}
\label{prop:gradedComplex}
\begin{enumerate}
\item $W_.(\DR(D,E, \nabla))$ is an increasing filtration.
\item $\Res_m(E)$ induces an isomorphism
\[
  \Gr^W_m(\DR(D, \nabla, E)) \rightarrow 
  \pushforward {v_m} (W_0(\DR_{\tilde D_m}
  (\tilde C_m, \tilde E_m))[-m])
\]
\end{enumerate}
\end{proposition}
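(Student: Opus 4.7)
The plan is to reduce everything to the local monomial computation given just before the proposition. Both $W_\bullet$ and $\Res_m(E)$ respect the decomposition $V = V^1 \oplus \cdots \oplus V^r$ into rank-one unitary summands with canonical extensions $E^i$ and local generators $\mu_i$ on a polydisk $\Delta = \polydisk$, so I would work with monomials $\omega \otimes \mu_i$ where
\[
  \omega = \frac{dz_{j_1}}{z_{j_1}}\wedge\cdots\wedge\frac{dz_{j_k}}{z_{j_k}}\wedge dz_{j_{k+1}}\wedge\cdots\wedge dz_{j_p};
\]
such a monomial lies in $W_m$ iff at most $m$ of $j_1, \ldots, j_k$ satisfy $\lambda_{i, j_r} = 1$. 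Part (1) is then immediate, since ``at most $m$'' implies ``at most $m+1$'', so $W_m \subseteq W_{m+1}$.

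For part (2), reindexing the defining identity $W_m = \ker \Res_{m+1}(E)$ to $W_{m-1} = \ker \Res_m(E)$ immediately exhibits $\Res_m(E)$ as inducing an injection $\Gr^W_m \DR_X(D,E) \hookrightarrow \pushforward{v_m}\DR_{\tilde D_m}(\tilde C_m, \tilde E_m)[-m]$. I would then verify on monomials that the image lands in $W_0$: if $\omega \otimes \mu_i \in W_m$ has nonzero residue on a component $D^{j_1} \cap \cdots \cap D^{j_m}$ of $D_m$, then $\omega$ must contain $\frac{dz_{j_1}}{z_{j_1}} \wedge \cdots \wedge \frac{dz_{j_m}}{z_{j_m}}$ with $\lambda_{i, j_r} = 1$ for each $r$, or else the projection to $\tilde E_m$ kills $\mu_i$. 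Any remaining log factor $\frac{dz_{j'}}{z_{j'}}$ of $\omega$ with $\lambda_{i, j'} = 1$ would contribute an $(m+1)$st identity factor and push $\omega \otimes \mu_i$ out of $W_m$, so no such factor is present; the residue therefore has no identity log factor along $\tilde C_m$, which is exactly the condition defining $W_0$ of the target.

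For surjectivity, I would lift a local section $\eta \otimes \mu_i$ of $W_0(\DR_{\tilde D_m}(\tilde C_m, \tilde E_m))$ near $D^{j_1} \cap \cdots \cap D^{j_m}$ by choosing any local extension of $\eta$ to $X$ and wedging with $\frac{dz_{j_1}}{z_{j_1}} \wedge \cdots \wedge \frac{dz_{j_m}}{z_{j_m}}$; the result lies in $W_m$ (exactly $m$ identity factors), and $\Res_m(E)$ recovers $\pm\eta \otimes \mu_i$. Compatibility with the differentials is automatic from the preceding lemma, which already asserts that $\Res_m(E)$ is a morphism of complexes, and the graded pieces inherit this structure. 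The principal obstacle I foresee is making the passage from the local rank-one decomposition to a globally defined isomorphism on $X$ rigorous, since the splitting $V = \bigoplus V^i$ is only available on polydisks trivializing $V$; this should be settled by invoking the uniqueness clauses of Theorem \ref{thm:localDes}, which render $E_m$, $\tilde E_m$, and $\Res_m(E)$ intrinsic and hence force the local isomorphisms to glue.
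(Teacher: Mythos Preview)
Your proof is correct and follows essentially the same local monomial computation as the paper: reduce to a polydisk with rank-one summands, read off that $W_\bullet$ is increasing from the ``at most $m$'' condition, and then verify that $\Res_m(E)$ has image in $W_0$, kernel $W_{m-1}$, and is surjective by wedging with the appropriate log $m$-form. Your worry about gluing is unnecessary: $W_m = \ker \Res_{m+1}(E)$ is a global definition and $\Res_m(E)$ is a globally defined morphism of sheaves of complexes, so checking that it induces an isomorphism on graded pieces is a stalk-local question, and the local rank-one splitting is only a tool for that check.
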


\begin{proof}
The statements are local. We can assume $X$ is a polydisk and $V$ is a unitary
local system of rank 1. 

\bigskip
1. From the local description of $W_m(\DR_X(D, E))$, it is clear 
that $W_.$ is an increasing filtration.

\bigskip
2. Let $s$ be a section $W_m(\DR_X(D, E))$. Use the local description
above, $s$ is of the form
\[
  \omega\otimes \mu
\]
where 
\[
  \omega = 
  \frac{dz_{j_1}}{z_{j_1}}\wedge\cdots\wedge\frac{dz_{j_k}}{z_{j_k}}
  \wedge dz_{j_{k+1}}\cdots\wedge dz_{j_p}
\]
and $\omega$ has at most $m$ log 1-forms acting on $V$ by identity.
$\mu$ is a generating section of $E$.

First, we show $\Res_m(E)(s) \in W_0(\Omega_{\tilde D_m}^{p-m}(\log \tilde C_m)
\otimes E_m$. 
\[
  \Res_m(E)(s) = \Res_m(\omega)\otimes\mu_m
\]
By the construction of $\omega$, $\Res_m(\omega)$ does not have log form
$\frac{dz_{j}}{z_j}$
acting on $V$ by identity. This shows that
\[
  \Res_m(E)(s) \in W_0(\Omega^{p-m}_{\tilde D_m}(\log \tilde C_m)\otimes E_m)
\]

If $\omega_0\otimes\mu_m \in W_0(\Omega^{p-m}_{\tilde D_m}(\log \tilde C_m)\otimes E_m)$,
to get a preimage in $W_m(\Omega^p_X(\log D)\otimes E)$, simply take
\[
  \omega_m\wedge\omega_0\otimes \mu
\]
where $\omega_m$ is any $m$-form. And $\omega_m\wedge\omega_0\otimes \mu
\in W_m(\Omega_X^p(\log D)\otimes E)$ by the construction of $\omega_0$. 
This shows that 
\[
  \Res_m(E): W_m(\DR(D, E,\nabla)) \rightarrow 
  W_0(\DR(\tilde C_m, \tilde E_m, \tilde \nabla_m))
\]
is surjective.

If $\Res_m(E)(s) = 0$, that means in $\omega$, 
there are at most $m-1$ log forms
acting on $V$ by identity. This is precisely the local description of
$W_{m-1}(\Omega_X^p(\log D)\otimes E)$.
\end{proof}

\section{Mixed Hodge Structure on the de Rham Complex}
The framework for studying the mixed Hodge structure on $\DR_X(D,E)$ 
has been worked out by Deligne in \cite{Deligne71} and \cite{Deligne74}.
The analysis of the mixed Hodge structure on $\DR_X(D,E)$ 
was given by Timmerscheidt in 
\cite{timmerscheidt87}. We will give an overview about the results
from both authors. The vanishing theorem in the following section 
is a consequence of the mixed Hodge structure on $\DR_X(D,E)$.

Let $A$ denote $\integer, \rational$ or $\real$ and $A\otimes \rational$
the field $\rational$ or $\real$. 

Assume $V$ has a $A$-lattice throughout this section, i.e. there exists
a unitary local system $V_{A}$ defined over $A$ such that
\[
  V = V_{A}\otimes_{A}\complex
\]

Let $D^+(A)$(resp. $D^+(\complex))$  denote the derived category of $A$-modules
(resp. $\complex$-vector spaces)

The main result of this section is
\begin{theorem}\cite{timmerscheidt87}(Proposition 6.4)
\[
  (\derivedpushforward{j}V_A, 
  (\derivedpushforward{j}V_{A\otimes\rational}, \tau),
  (\DR_X(D, E), F, W))
\]
is an $A$-cohomological mixed Hodge complex.
\end{theorem}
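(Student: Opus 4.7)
The plan is to verify the three defining properties of an $A$-cohomological mixed Hodge complex in Deligne's sense: first, that $\derivedpushforward{j}V_A$, $\derivedpushforward{j}V_{A\otimes\rational}$, and $\DR_X(D,E)$ are related by the appropriate comparison quasi-isomorphisms; second, that the canonical filtration $\tau$ and the weight filtration $W$ agree after tensoring with $\complex$; and third, that each graded piece $(\Gr^W_m \DR_X(D,E), F)$ is a pure Hodge complex of weight $m$.

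First I would establish the de Rham comparison. By the defining property of the canonical extension, the inclusion $V \hookrightarrow \DR_X(D,E)|_U$ extends to a quasi-isomorphism $\derivedpushforward{j}V \simeq \DR_X(D,E)$ in $D^+(\complex)$. Tensoring $\derivedpushforward{j}V_A$ with $\complex$ over $A$ and with $A\otimes\rational$ respectively then yields the required comparison maps linking the $A$-level, rational, and complex realizations.

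Next I would compare the two weight filtrations. By Proposition \ref{prop:gradedComplex}, the residue map induces an isomorphism
\[
  \Gr^W_m \DR_X(D,E) \simeq \pushforward{v_m}\bigl(W_0\DR_{\tilde D_m}(\tilde C_m, \tilde E_m)\bigr)[-m].
\]
The subcomplex $W_0\DR_{\tilde D_m}(\tilde C_m, \tilde E_m)$, by the local description in Section \ref{sect:weightfiltration}, contains no log form acting by the identity, so it is quasi-isomorphic to the unitary local system $V_m$ placed in degree $0$. Matching this against the standard iterated-residue identification $\Gr^\tau_m \derivedpushforward{j}V \simeq \pushforward{v_m}V_m[-m]$ produces the filtered quasi-isomorphism $(\DR_X(D,E), W) \otimes \complex \simeq (\derivedpushforward{j}V_{A\otimes\rational}, \tau) \otimes \complex$ in the filtered derived category.

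The main obstacle and deepest part of the theorem is showing that each $(\Gr^W_m \DR_X(D,E), F)$ is a pure Hodge complex of weight $m$. By the previous step this reduces to proving that $\pushforward{v_m}\DR_{\tilde D_m}(\tilde C_m, \tilde E_m)$, equipped with its naive Hodge filtration, underlies a pure Hodge structure of the appropriate weight on $H^*(\tilde D_m - \tilde C_m, V_m)$. Here I would invoke the Hodge theory of unitary local systems on smooth projective varieties: by Theorem \ref{thm:localDes}(1), $V_m$ is unitary, so $E_m$ carries a flat Hermitian metric, and harmonic analysis with unitary coefficients (as in the work of Zucker and Timmerscheidt) produces the requisite $(p,q)$-decomposition on cohomology, degeneration of the Hodge-to-de Rham spectral sequence at $E_1$, and strictness of $F$. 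This analytic input is the technical heart of the argument; the remaining axioms of a mixed Hodge complex then follow formally from the two comparison results established in the earlier paragraphs.
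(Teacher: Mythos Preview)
The paper does not actually prove this theorem. It is stated with a citation to \cite{timmerscheidt87} (Proposition 6.4), and the remainder of the section only reproduces the relevant definitions of (cohomological, mixed) Hodge complexes from Deligne's work ``for readers' sake'', followed by Example~\ref{ex:CMHC} for the constant-coefficient case. There is therefore no paper-side argument to compare your proposal against.

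Your outline is a reasonable sketch of how Timmerscheidt's proof proceeds, with one imprecision worth flagging. You assert that $W_0\DR_{\tilde D_m}(\tilde C_m,\tilde E_m)$ is quasi-isomorphic to ``the unitary local system $V_m$ placed in degree $0$''. But $V_m$ is defined only on $D_m - D_{m+1}$, not on all of $\tilde D_m$; the correct statement is that $W_0\DR_{\tilde D_m}(\tilde C_m,\tilde E_m)$ is a resolution of the \emph{ordinary} direct image of $V_m$ to $\tilde D_m$ (not a derived pushforward, and not $V_m$ itself). This is exactly the result the present paper later invokes from \cite{esnault-viehweg}, Appendix~D, in the proof of Theorem~\ref{thm:MHSPartialWeightFiltration}. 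With that correction your third paragraph correctly isolates the analytic core: the pure Hodge structure on the hypercohomology of $W_0\DR_{\tilde D_m}(\tilde C_m,\tilde E_m)$ is the genuine input, supplied by Timmerscheidt's harmonic theory for unitary coefficients, and the comparison steps you describe are then formal.
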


For readers' sake, we included all relvant definitions involved
in the above theorem here. They can be found in \cite{Deligne74}
or \cite{Zein10}(Section 5)

\begin{definition}(Hodge Structure (HS))
A Hodge structure of weight $n$ is defined by the data:
\begin{enumerate}
\item A finitely generated abelian group $H_{\integer}$;
\item A decomposition by complex subspaces:
\[
  H_{\complex}\definedas H_{\integer}\otimes_{\integer} \complex
  = \bigoplus\limits_{p+q=n}H^{p,q}
\]
satisfying 
\[
  H^{p,q} = \overline{H^{q,p}}
\]
\end{enumerate}

\end{definition}

\begin{definition}(Hodge Complex (HC))
A Hodge $A$-complex $K$ of weight $n$ consists of 
\begin{enumerate}
\item A complex $K_A$ of $A$-modules, such that $H^k(K_A)$ is 
an $A$-module of finite type for all $k$;
\item A filtered complex $(K_{\complex}, F)$ of $\complex$-vector
spaces ;
\item Anisomorphism 
\[
  \alpha : K_A \otimes \complex \rightarrow K_{\complex}
\]
in $D^+(\complex)$;
\end{enumerate}
The following axioms must be satisfied
\begin{enumerate}
\item The spectral sequence defined by $(K_{\complex}, F)$ degenerates
at $E_1$;

\item for all $k$, the filtration $F$ on 
$H^k(K_{\complex}) \cong H^k(K_A)\otimes \complex$ defines a 
$A$-Hodge strucutre of weight $n + k$ on $H^k(K_A)$
\end{enumerate}
\end{definition}

\begin{definition}
Let $X$ be a topological space. An $A$-Cohomological Hodge Complex (CHC)
$K$ of weight $n$ on $X$, consists of:
\begin{enumerate}
\item A complex of sheaves $K_A$ of $A$-modules on $X$;
\item A filtered complex of sheaves $(K_{\complex},F)$ of 
$\complex$-vector spaces on $X$;
\item an isomorphism 
\[
  \alpha: K_A\otimes \complex \rightarrow K_{\complex}
\]
in $D^+(X, \complex)$
\end{enumerate}
Moreover, the triple $(R\Gamma(K_A), R\Gamma(K_{\complex}, F), R\Gamma(\alpha))$
is a Hodge Complex of weight $n$
\end{definition}

\begin{definition}(Mixed Hodge Complex)
An $A$-Mixed Hodge Complex (MHC) $K$ consists of:
\begin{enumerate}
\item A complex $K_A$ of $A$-modules such that $H^k(K_A)$ is an $A$-module
of finite type for all $k$;
\item A filtered complex $(K_{A\otimes \rational}, W)$ of $A\otimes\rational$-
vector spaces with an increasing filtration $W$;
\item An isomorphism $K_A\otimes \rational \cong K_{A\otimes\rational}$
in $D^+(A\otimes\rational)$;
\item A bi-filtered complex $(K_{\complex}, W, F)$ of $\complex$-vector spaces
with an increasing (resp. descreasing) filtration $W$ (resp. $F$) and an 
isomorphism:
\[
  \alpha: (K_{A\otimes\rational}, W)\otimes\complex 
  \cong (K_{\complex}, W)
\]
in $D^+F(\complex)$.
\end{enumerate}
Moreover, the following axiom needs to be satisfied:
For all n, the system consisting of
\begin{itemize}
\item the complex $\Gr^W_n(K_{A\otimes\rational})$ of $A\otimes\rational$-
vector spaces,
\item the complex $\Gr^W_n(K_{\complex}, F)$ of $\complex$-vector spaces 
with induced $F$ filtration,
\item the isomorphism 
\[
  \Gr^W_n(\alpha): \Gr^W_n(K_{A\otimes\rational})\otimes\complex
  \rightarrow \Gr^W_n(K_{\complex})
\]
\end{itemize}
is an $A\otimes\rational$-Hodge Complex of weight $n$.
\end{definition}

\begin{definition}(Cohomological Mixed Hodge Complex (CMHC))
An $A$-Cohomological Mixed Hodge Complex $K$ (CMHC) on a topological 
space $X$ consists of:
\begin{enumerate}
\item A complex of sheaves $K_A$ of sheaves of $A$-modules on $X$ 
such that $H^k(X, K_A)$ are $A$-modules of finite type;
\item A filtered complex $(K_{A\otimes\rational}, W)$ of sheaves 
of $A\otimes\rational$-vector spaces on $X$ with 
an increasing filtration $W$ and an isomorphism 
\[
  K_A\otimes\rational \cong K_{A\otimes\rational}
\]
in $D^+(X, A\otimes\rational)$;
\item A bi-filtered complex of sheaves $(K_{\complex}, W, F)$ of
$\complex$-vector spaces on $X$ with an increasing (resp. descreasing)
filtration $W$ (resp. $F$) and an isomorphism:
\[
  \alpha: (K_{A\otimes\rational}, W)\otimes\complex 
  \rightarrow (K_{\complex}, W) 
\]
in $D^+F(X, \complex)$.
\end{enumerate}
Moreover, the following axiom needs to be satisfied:
For all $n$, the system consisting of:
\begin{itemize}
\item the complex $\Gr^W_n(K_{A\otimes\rational})$ of sheaves 
of $A\otimes\rational$-vector spaces on $X$,
\item the complex $\Gr^W_n(K_{\complex}, F)$ of sheaves of 
$\complex$-vector spaces 
with induced $F$ filtration,
\item the isomorphism 
\[
  \Gr^W_n(\alpha): \Gr^W_n(K_{A\otimes\rational})\otimes\complex
  \rightarrow \Gr^W_n(K_{\complex})
\]
\end{itemize}
is an $A\otimes\rational$-Cohomological Hodge Complex of weight $n$.
\end{definition}

The following example of Cohomological Mixed Hodge Complex can be found 
in \cite{Deligne74} and \cite{Zein10}
\begin{example}
\label{ex:CMHC}
Let $X$ be a smooth projective variety over $\complex$, $D \subset X$
a simple normal crossing divisor. 
Let $U \definedas X - D$ and let 
\[
  j: U \rightarrow X
\]
be the inclusion map. 

Let $\rational_U$ be the constant sheaf with $\rational$-coefficient
on $U$. $\derivedpushforward{j}\rational_U\otimes \complex = 
\derivedpushforward{\complex_U}$ is quasi-isomorphic to 
the logarithmic de Rham complex
\[
  O_X \xrightarrow{d} \Omega_X(\log D)
  \xrightarrow{d}\cdots \xrightarrow{d}
  \Omega_X^n(\log D)
\]

For any complex $\komplex{K}$ 
of sheaves on $X$, let $\tau$ be the canonical increasing filtration
\begin{gather*}
\tau_m K^q =  
\begin{cases}
K^q & \text{ if } q < m \\
\ker{d^q} \subset K^q & \text{ if } q = m \\
0 & \text{ if } q > m
\end{cases}
\end{gather*}

See \cite{Zein10}(Corollary 6.4) for the following result:

The system consisting of
\begin{enumerate}
\item $(\derivedpushforward{j}\rational_U, \tau)$;
\item $(\komplex{\Omega_X}(\log D), W, F)$ with usual weight and Hodge
filtration $W$ and $F$;
\item The quasi-isomorphism 
\[
  (\derivedpushforward{j}\rational_U,\tau)\otimes\complex 
  \cong 
  (\komplex{\Omega_X(\log D)}, W)
\]
\end{enumerate}

is a Cohomological Mixed Hodge Complex on $X$.   
\end{example}

\section{Vanishing Theorem for the de Rham Complex}
\label{sect:vanishingThm}
we have seen in the previous section that if $V$ has a real lattice,
then 
\[
  (\derivedpushforward{j}V_A,
  (\derivedpushforward{j}V_{A\otimes\rational}, \tau),
  (\DR_X(D,E),F,W))
\]
is an $A$-cohomological mixed Hodge complex. As a result of the 
general theory developed in \cite{Deligne74}, we have

\begin{theorem}
\label{thm:spectralSequenceDeg}
Assume there is a real-valued unitary locayl system $V_{\real}$ 
defined on $U$ such that
\[
  V = V_{\real}\otimes_{\real}\complex
\]
Let $V$ and $\DR_X(D, E)$ be as above. 
The spectral sequence associated to the Hodge 
filtration on $\DR_X(D, E)$.
\[
  E^{p,q}_1 = H^q(X, \Omega_X^p(\log D)\otimes E) => 
  \hypercohomology^{p+q}(X, \DR_X(D, E))
\]
degenerates at $E_1$
\end{theorem}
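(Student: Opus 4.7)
The plan is to deduce this theorem directly from the cohomological mixed Hodge complex structure established just before: once we know that the triple
\[
  (\derivedpushforward{j}V_A,\;(\derivedpushforward{j}V_{A\otimes\rational}, \tau),\;(\DR_X(D,E), F, W))
\]
is a CMHC on $X$, the statement becomes a direct instance of Deligne's general theorem that in any cohomological mixed Hodge complex the Hodge-to-de Rham spectral sequence degenerates at $E_1$ (see \cite{Deligne71}, Scholie 8.1.9, and \cite{Deligne74}).

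Concretely, the first step is to apply $R\Gamma(X, -)$ to the CMHC, producing a mixed Hodge complex on global sections. By the defining axiom of a CMHC, the associated $W$-graded pieces $\Gr^W_n$ together with the induced $F$ filtration form Hodge complexes of weight $n$. By definition of a Hodge complex, the $F$-spectral sequence of each $\Gr^W_n$ already degenerates at $E_1$, and $F$ induces a pure Hodge structure of the appropriate weight on the cohomology of $\Gr^W_n$.

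The second step is to propagate this $E_1$-degeneration from the $W$-graded pieces to the total complex $\DR_X(D, E)$. This is precisely the content of Deligne's two-filtration lemma (\cite{Deligne71}, 1.3.16 together with 8.1.9 of the same paper, or the analogous result in \cite{Deligne74}): whenever $(K, W, F)$ is the complex underlying a mixed Hodge complex, the Hodge filtration $F$ is strict with respect to the differentials of $K$, which is equivalent to degeneration at $E_1$ of the associated $F$-spectral sequence. The identification of the $E_1$ page with $H^q(X, \Omega_X^p(\log D)\otimes E)$ is then immediate from the fact that the Hodge filtration on $\DR_X(D,E)$ is the \emph{b\^ete} (stupid) filtration, so that $\Gr^p_F\DR_X(D,E) = \Omega_X^p(\log D)\otimes E [-p]$.

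The main obstacle here is essentially bookkeeping rather than genuine mathematics: once the CMHC structure from the preceding section is in hand, the degeneration is automatic from Deligne's formalism. The only care required is in citing the correct version of the two-filtration lemma and in checking that the Hodge filtration $F$ defined on $\DR_X(D,E)$ agrees with the b\^ete filtration, so that the $E_1$-page of the Hodge spectral sequence really computes the sheaf cohomology groups appearing in the statement.
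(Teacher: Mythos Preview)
Your proposal is correct and matches the paper's approach exactly: the paper does not give a separate proof but simply states that the theorem follows from the CMHC structure established in the preceding section together with the general theory of \cite{Deligne74}. Your write-up is in fact more detailed than the paper's, which just invokes Deligne's formalism in a single sentence.
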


If $V$ does not have an $A$-lattice with $A \subset \real$, then 
we cannot expect $\DR_X(D, E)$ to carry a mixed Hodge structure.
However, the degeneration of Hodge spectral sequence still holds
true. One can proofs of this in \cite{timmerscheidt87}. We will give
a simpler proof here:

Let $\bar{V}$ denote the conjugate of $V$, i.e. the monodromy representation
of $\bar V$ is the complex conjugate of the monodromy representation of $V$

\begin{lemma}
There exists a real unitary local system $W_{\real}$ of rank $2r$ such that
\[
  V\oplus \bar V \cong W_{\real}\otimes_{\real}\complex
\]
\end{lemma}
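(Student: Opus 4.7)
The plan is to reformulate the statement via monodromy representations and then exhibit an explicit real structure on $V\oplus\bar V$. Since $V$ is a unitary local system of rank $r$ on $U$, it corresponds to a monodromy representation $T:\pi_1(U)\to U(r)$, and $\bar V$ corresponds to the entrywise complex-conjugate representation $\bar T$. The direct sum $V\oplus\bar V$ is then the block-diagonal representation $T\oplus\bar T:\pi_1(U)\to U(2r)$ on $\complex^r\oplus\complex^r$, and the lemma will follow once I produce a $\complex$-antilinear involution on $\complex^{2r}$ that commutes with $T\oplus\bar T$: its fixed locus will be the desired real form $W_{\real}$.

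Concretely, I would define $\sigma:\complex^r\oplus\complex^r\to\complex^r\oplus\complex^r$ by $\sigma(v,w)=(\bar w,\bar v)$. A direct check shows $\sigma$ is conjugate-linear with $\sigma^2=\id$, and for every $\gamma\in\pi_1(U)$
\[
\sigma\bigl((T\oplus\bar T)(\gamma)(v,w)\bigr)=\bigl(\overline{\bar T(\gamma)w},\overline{T(\gamma)v}\bigr)=(T(\gamma)\bar w,\bar T(\gamma)\bar v)=(T\oplus\bar T)(\gamma)\,\sigma(v,w),
\]
so $\sigma$ is equivariant. I then set $W_{\real}$ to be the $\real$-$2r$-dimensional fixed locus $\{(v,\bar v):v\in\complex^r\}\subset\complex^{2r}$. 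Equivariance of $\sigma$ means $T\oplus\bar T$ preserves $W_{\real}$ and restricts to a real representation $\rho:\pi_1(U)\to\GL(W_{\real})$.

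To conclude, I would invoke the standard equivariant isomorphism $W_{\real}\otimes_{\real}\complex\xrightarrow{\sim}\complex^{2r}$, $w\otimes 1+w'\otimes i\mapsto w+iw'$, which identifies $\rho\otimes_{\real}\complex$ with $T\oplus\bar T$ and therefore supplies the required local-system isomorphism $W_{\real}\otimes_{\real}\complex\cong V\oplus\bar V$. Unitarity of $\rho$ follows by restricting the standard Hermitian form on $\complex^{2r}$ to pairs of $\sigma$-fixed vectors: $(v,\bar v)$ and $(w,\bar w)$ pair to $\langle v,w\rangle+\overline{\langle v,w\rangle}=2\,\mathrm{Re}\langle v,w\rangle$, a positive-definite real inner product preserved by $\rho$, so $\rho$ lands in the orthogonal group.

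The only delicate point, and the reason the lemma concerns $V\oplus\bar V$ rather than $V$ itself, is the commutation $\sigma\circ(T\oplus\bar T)(\gamma)=(T\oplus\bar T)(\gamma)\circ\sigma$: exchanging the two factors is precisely what converts the conjugation-twist into genuine commutation. Once that identity is in place the remaining verifications (the real dimension count, the equivariant complexification, and unitarity) are routine linear algebra.
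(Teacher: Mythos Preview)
Your proof is correct, and in fact cleaner than the paper's. You work globally: you exhibit an explicit $\complex$-antilinear involution $\sigma(v,w)=(\bar w,\bar v)$ on the representation space of $T\oplus\bar T$, verify it is equivariant, and take its fixed locus as the real form. This is the standard ``real structure'' argument, and it makes the conclusion immediate without any local-to-global issues.

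The paper instead works locally on a polydisk, uses that $\pi_1$ is abelian there to simultaneously diagonalize $T$, and then for each rank-one summand observes that $\mathrm{diag}(e^{i\theta_j},e^{-i\theta_j})$ is $\complex$-conjugate to the real rotation matrix $\begin{pmatrix}\cos\theta_j&\sin\theta_j\\-\sin\theta_j&\cos\theta_j\end{pmatrix}$; it then declares the resulting $W_{\real}$ to be ``canonically determined by $V$'' so that the local pieces glue. Your approach buys you a one-line global construction with no gluing step and no appeal to abelianness of the local fundamental group; the paper's approach buys an explicit identification of the real monodromy matrices (rotations), which is concrete but leaves the canonicity/gluing assertion somewhat implicit. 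Either route suffices for the application, since all that is used downstream is the existence of \emph{some} real lattice for $V\oplus\bar V$.
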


\begin{proof}
We will construct $W_{\real}$ locally, and show it is canonically determined
by $V$. Over a polydisk, we can assume $V$ is diagonal, and we write
\[
  V = \bigoplus_{j=1}^r V^j 
\]
where $V^i$ is a unitary local system of rank 1 with monodromy
\[
  \lambda_j = \cos\theta_j + i\sin\theta_j
\]

We will construct $W^j_{\real}$ for each $j$. The monodromy of $\bar V^j$ is
$\bar \lambda_j$ and the monodromy of $V^j\oplus \bar V^j$ is 
\[
\begin{bmatrix}
  \cos\theta_j + i\sin\theta_j & 0 \\
  0 & \cos\theta_j - i\sin\theta_j \\
\end{bmatrix}
\]
Since 
\[
\begin{bmatrix}
  \cos\theta_j + i\sin\theta_j & 0 \\
  0 & \cos\theta_j - i\sin\theta_j \\
\end{bmatrix}
\text{ and }
\begin{bmatrix}
  \cos\theta_j  & \sin\theta_j \\
  -\sin\theta_j & \cos\theta_j \\
\end{bmatrix}
\]
have the same characteristic polynomial over $\complex$, they must be
conjugate over $\complex$. Therefore, we can take $W^j$ to be
\[
\begin{bmatrix}
  \cos\theta_j  & \sin\theta_j \\
  -\sin\theta_j & \cos\theta_j \\
\end{bmatrix}
\]
Then,
\[
  W_{\real} = \bigoplus_{j=1}^r W^j
\]
\end{proof}

Now, let $V$ be any unitary local system on $X - D$, and 
let $\DR_X(D,E)$ be its de Rham complex

\begin{corollary}
The Hodge spectral sequence
\[
  E^{p,q}_1 := H^q(X, \Omega^p_X(\log D)\otimes E)
  => \hypercohomology^{p+q}(X, \DR(D,E)) 
  = H^{p+q}(X, \derivedpushforward{j}V)
\]
degenerates at $E_1$.
\end{corollary}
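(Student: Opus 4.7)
The plan is to bootstrap from Theorem \ref{thm:spectralSequenceDeg} (which requires a real lattice) by passing to $V \oplus \bar V$, which by the preceding lemma \emph{does} admit a real structure. So first I would apply Theorem \ref{thm:spectralSequenceDeg} to $W_{\real}$ to conclude that the Hodge spectral sequence of $\DR_X(D, E \oplus \bar E)$ degenerates at $E_1$, where $\bar E$ denotes the canonical extension of $\bar V$. Then I would descend this degeneration to the direct summand corresponding to $V$.

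The key compatibility to check is that the canonical extension, the logarithmic de Rham complex, and the Hodge filtration all commute with direct sums. If $(E, \nabla)$ and $(\bar E, \bar\nabla)$ are the canonical extensions of $V$ and $\bar V$, then $(E \oplus \bar E, \nabla \oplus \bar\nabla)$ is a logarithmic extension whose flat sections restrict to $V \oplus \bar V$ and whose residue eigenvalues still lie in $[0,1) + i\real$; by the uniqueness in Deligne's characterization, it is therefore the canonical extension of $V \oplus \bar V \cong W_{\real} \otimes_{\real} \complex$. Consequently, as filtered complexes,
\[
  (\DR_X(D, E \oplus \bar E), F) \;\cong\; (\DR_X(D, E), F) \;\oplus\; (\DR_X(D, \bar E), F),
\]
and the Hodge spectral sequence of the left-hand side splits as the direct sum of the Hodge spectral sequences of $V$ and of $\bar V$.

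Finally, I would invoke the elementary fact that $E_1$-degeneration descends to direct summands. Degeneration at $E_1$ is equivalent to the numerical identity $\sum_{p+q=k}\dim E_1^{p,q} = \dim \hypercohomology^k$; both sides are additive over direct sums, and each summand automatically satisfies $\ge$, so equality for the sum forces equality for each summand. This yields $E_1$-degeneration of the Hodge spectral sequence for $V$, as required. The only step that requires any real care is the direct-sum compatibility of the canonical extension with its Hodge filtration, but this follows immediately from uniqueness, so no serious obstacle arises.
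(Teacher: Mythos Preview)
Your argument is correct and is precisely the approach the paper takes; the paper's own proof is the single sentence ``Direct sum and taking cohomology commutes,'' and you have simply unpacked what that sentence means (compatibility of the canonical extension with $\oplus$, splitting of the filtered complex, and the dimension-count argument for why $E_1$-degeneration passes to summands).
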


\begin{proof}
Direct sum and taking cohomology commutes
\end{proof}

\begin{theorem}\cite{artin73}(Corollary 3.5)
\label{thm:artinVanishing}
Suppose $U$ is an affine variety of complex dimension $n$. Then, for any 
constructible sheaf $\mathcal{L}$ on $U$
\[
  H^k(U, \mathcal{L}) = 0
\]
for $k > n$
\end{theorem}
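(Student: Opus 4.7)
The plan is to induct on $n = \dim U$. The case $n = 0$ is trivial, and the case $n = 1$ says that an affine curve has cohomological dimension at most $1$ for constructible coefficients; this can be handled by a stratification argument reducing to the case of a locally constant sheaf on a smooth affine curve, where one uses that such a curve is homotopy equivalent to a finite $1$-dimensional CW complex.

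For the inductive step with $n \ge 2$, I would embed $U \subset \mathbb{A}^N$ and choose a generic linear form, producing a morphism $f : U \to \mathbb{A}^1$ whose fibers are closed subvarieties of $U$ of pure dimension $n - 1$ (hence affine of that dimension). The Leray spectral sequence
\[
  E_2^{p,q} = H^p(\mathbb{A}^1, R^q f_{*} \mathcal{L}) \Longrightarrow H^{p+q}(U, \mathcal{L})
\]
then reduces the desired vanishing for $k > n$ to two sub-claims: first, that $R^q f_{*} \mathcal{L} = 0$ for $q > n - 1$; second, that $H^p(\mathbb{A}^1, \mathcal{F}) = 0$ for $p > 1$ and any constructible $\mathcal{F}$. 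The second assertion is exactly the $n = 1$ base case applied to $\mathbb{A}^1$. For the first, the stalk $(R^q f_{*} \mathcal{L})_x$ is the direct limit of $H^q(f^{-1}(V), \mathcal{L})$ over analytic neighborhoods $V$ of $x$; for $V$ a small disk around a regular value, one expects $f^{-1}(V)$ to deformation retract onto the affine fiber $f^{-1}(x)$, at which point the induction hypothesis delivers the vanishing.

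The main obstacle is precisely this stalk computation for the non-proper morphism $f$. Proper base change does not apply directly, so one has to execute the analytic local-triviality argument above, showing that over a small disk the total space is homotopy equivalent -- compatibly with the constructible sheaf $\mathcal{L}$ -- to a generic smooth fiber. For generic points of $\mathbb{A}^1$ this should follow from Ehresmann's theorem combined with the fact that $\mathcal{L}$ is locally constant on a dense Zariski open. The finitely many bad values of $f$, where either the fiber is singular or the stratification of $\mathcal{L}$ fails to be transverse, require separate local models: one would construct an explicit retraction of a small punctured-disk neighborhood of a special fiber onto the special fiber itself, which remains affine of dimension $n - 1$, and check that $\mathcal{L}$ is constant along the retraction up to constructibility. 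This bookkeeping near the singular fibers is where the proof really lives.
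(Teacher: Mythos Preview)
The paper does not prove this statement; it is simply quoted as Corollary~3.5 of \cite{artin73} and used as a black box. So there is no proof in the paper to compare against.

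As for your sketch on its own merits: the inductive scheme via a linear projection $f:U\to\mathbb{A}^1$ and the Leray spectral sequence is exactly the classical strategy, and you have correctly located the genuine content in the stalk computation of $R^qf_*\mathcal{L}$ for a non-proper $f$. Your proposed fix, however, is where the argument is still incomplete. The claim that $f^{-1}(V)$ deformation retracts onto the fiber $f^{-1}(x)$ for $V$ a small disk, in a way compatible with an arbitrary constructible $\mathcal{L}$, is not something Ehresmann gives you: Ehresmann applies to proper submersions, and here $f$ is neither proper nor a submersion in general, even over regular values. What one actually needs is a relative version of the statement that an affine variety has the homotopy type of a CW complex of real dimension at most its complex dimension; in the \'etale setting this is packaged as Artin's theory of elementary fibrations and good neighborhoods, and in the analytic setting the cleanest route is stratified Morse theory (Goresky--MacPherson) or the Andreotti--Frankel argument applied fiberwise after refining the stratification so that $f$ is stratified-submersive. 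Without one of these ingredients, the ``explicit retraction near singular fibers'' step you flag is not just bookkeeping---it is the entire theorem, and your outline does not yet supply it.
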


\begin{corollary}
\label{coro:artinVanishing}
Let $V$ and $\DR_X(D, E)$ be as above. Suppose $U$ is affine, then
\[
  H^q(X, \Omega_X^p(\log D)\otimes E) = 0 
\]
for $p + q > \dim X$
\end{corollary}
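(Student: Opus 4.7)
The plan is to combine three ingredients already in hand: the degeneration of the Hodge spectral sequence proved just above, the identification of the hypercohomology of $\DR_X(D,E)$ with the local system cohomology on $U$, and Artin's vanishing theorem cited in Theorem \ref{thm:artinVanishing}.

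First I would recall that by Deligne's comparison theorem, the de Rham complex of the canonical extension computes $\derivedpushforward{j}V$; that is, there is a quasi-isomorphism $\DR_X(D,E) \simeq \derivedpushforward{j}V$ in $D^+(X,\complex)$. Therefore
\[
  \hypercohomology^{n}(X, \DR_X(D,E)) \;\cong\; H^{n}(X, \derivedpushforward{j}V) \;\cong\; H^{n}(U, V).
\]
Since $V$ is in particular a constructible sheaf on $U$ and $U$ is an affine variety of complex dimension $n = \dim X$, Theorem \ref{thm:artinVanishing} gives
\[
  H^{k}(U, V) = 0 \quad \text{for } k > \dim X.
\]

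Next I would bring in the Hodge spectral sequence
\[
  E^{p,q}_1 = H^q(X, \Omega^p_X(\log D)\otimes E) \;\Longrightarrow\; \hypercohomology^{p+q}(X, \DR_X(D,E)),
\]
which degenerates at $E_1$ by the corollary immediately preceding the statement (this is the point where we do \emph{not} need a real structure on $V$). Degeneration means $E^{p,q}_1 = E^{p,q}_\infty$ for all $p,q$, and the graded pieces of the induced filtration on the abutment are precisely the $E^{p,q}_\infty$ terms. In particular, if the abutment $\hypercohomology^{p+q}(X, \DR_X(D,E))$ vanishes, then every $E^{p,q}_1$ with $p+q$ equal to that degree vanishes as well.

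Combining the two, for $p+q > \dim X$ the abutment vanishes by Artin's theorem, so each $E_1^{p,q} = H^q(X, \Omega^p_X(\log D)\otimes E)$ must vanish, giving the claim. The only step requiring care is the identification $\hypercohomology^{\ast}(X, \DR_X(D,E)) \cong H^{\ast}(U,V)$, which is not established in the excerpt but is a standard consequence of Deligne's construction of the canonical extension; everything else is bookkeeping with the already-proven degeneration and Artin vanishing.
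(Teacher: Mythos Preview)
Your argument is correct and is precisely the intended proof: the paper states Corollary~\ref{coro:artinVanishing} without further justification because it follows immediately from combining the $E_1$-degeneration just established with Theorem~\ref{thm:artinVanishing}. One small remark: the identification $\hypercohomology^{p+q}(X,\DR_X(D,E)) = H^{p+q}(X,\derivedpushforward{j}V)$ that you flag as ``not established in the excerpt'' is in fact already recorded in the statement of the preceding corollary, so you may cite it directly rather than appealing externally to Deligne.
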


\begin{lemma}
\label{lemma:1}
Suppose $B$ is a smooth divisor transversal to $D$. 
Then, there is short exact sequence 
\[
    0 \rightarrow \Omega_X^p(\log D + B)\otimes O_X(-B) 
    \xrightarrow{i} \logpform
    \xrightarrow{r} \Omega^p_B(\log D\cap B)
    \rightarrow 0
\]
where $i$ is the inclusion map, and $r$ is the restriction map.
\end{lemma}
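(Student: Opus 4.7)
The statement is local on $X$, so I would verify exactness stalkwise. Away from $B$ all three sheaves agree in the obvious way and the sequence is trivial, so the content is at a point $x \in B$. Because $B$ is smooth and meets $D$ transversally, I can pick local coordinates $z_1,\ldots,z_n$ centred at $x$ in which $D$ is cut out by $z_1\cdots z_s = 0$ (with $s=0$ allowed if $x\notin D$) and $B$ by $z_{s+1}=0$. In these coordinates, $O_X(-B)$ is locally the principal ideal generated by $z_{s+1}$, and each of the three sheaves involved is free with an explicit basis of wedge products.

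Specifically, $\Omega_X^p(\log(D+B))$ has the basis given by $p$-fold wedges of $1$-forms from $\{dz_1/z_1,\ldots,dz_s/z_s,\, dz_{s+1}/z_{s+1},\, dz_{s+2},\ldots,dz_n\}$; $\Omega_X^p(\log D)$ has the same basis with $dz_{s+1}/z_{s+1}$ replaced by $dz_{s+1}$; and $\Omega_B^p(\log D\cap B)$ has the analogous basis on $B$ with $dz_{s+1}$ dropped altogether (transversality ensures $D\cap B$ is a simple normal crossing divisor in $B$). With this in hand, I would describe $i$ as multiplication by the local generator $z_{s+1}$ of $O_X(-B)$: a basis element of the source that involves $dz_{s+1}/z_{s+1}$ is sent to the corresponding basis element of $\Omega_X^p(\log D)$ with $dz_{s+1}$ in place of $dz_{s+1}/z_{s+1}$, and one that does not is sent to $z_{s+1}$ times itself. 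Injectivity follows from torsion-freeness. The map $r$ is the pullback of forms to $B$; on basis elements it acts as the identity on those not containing $dz_{s+1}$ and as zero on those that do, and surjectivity is immediate since every basis element of the target lifts tautologically to one of the former type in $\Omega_X^p(\log D)$.

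The only remaining step is to verify that the image of $i$ equals the kernel of $r$. Decompose a local section $\sigma$ of $\Omega_X^p(\log D)$ as
\[
  \sigma = \sum_\alpha f_\alpha\,\omega_\alpha + \sum_\beta g_\beta\, dz_{s+1}\wedge \eta_\beta,
\]
where $\omega_\alpha$ runs over basis $p$-forms not involving $dz_{s+1}$ and $\eta_\beta$ over basis $(p-1)$-forms not involving $dz_{s+1}$. Then $r(\sigma) = \sum_\alpha (f_\alpha|_B)\, (\omega_\alpha|_B)$, which is a basis expansion in $\Omega_B^p(\log D\cap B)$, so $r(\sigma) = 0$ iff every $f_\alpha$ is divisible by $z_{s+1}$. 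This matches exactly the description of the image of $i$ given above. There is no genuine conceptual obstacle; the only real care is bookkeeping, namely tracking whether each basis element of $\Omega_X^p(\log(D+B))$ absorbs the factor $z_{s+1}$ into a regular $dz_{s+1}$ or simply becomes $z_{s+1}$ times a basis element of $\Omega_X^p(\log D)$.
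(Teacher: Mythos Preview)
Your proposal is correct and follows essentially the same approach as the paper: both reduce to a local computation, choose coordinates adapted to $D$ and $B$, write out explicit bases for the three modules, and read off exactness from the descriptions of $i$ and $r$. In fact your version is more complete than the paper's own argument, which only writes out the $p=1$ case over an affine $\Spec A$ and then declares the exactness clear, whereas you treat general $p$ and explicitly verify that $\ker r = \operatorname{im} i$.
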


\begin{proof}
For simplicity, we prove the case for $p=1$. 
We may also assume $X$ is affine. Let $X = \Spec A$, and let
$f_1,\cdots,f_s$ be the regular sequence corresponding to $D$, and let
$b$ be the defining equation of $B$. 

The basis of 
$\Omega_X^1(\log D + B)\otimes O_X(-B)$ as an $A$-module is
\[
  \frac{df_1}{f_1}\otimes b,\cdots, \frac{df_s}{f_s}\otimes b, 
  \frac{db}{b}\otimes b
\]

The basis of $\Omega_X(\log D)$ as an $A$-module is
\[
  \frac{df_1}{f_1},\cdots, \frac{df_s}{f_s}
\]

The basis of $\Omega_B(\log D\cap B)$ as an $\frac{A}{b}$-module is
\[
  \frac{df_1}{f_1},\cdots, \frac{df_s}{f_s}
\]
where by abuse of notation $f_i$ are regarded as their image in $\frac{A}{b}$.

Then, it is clear how to define $i$ and $r$ show that the above sequence 
is exact
\end{proof}

\begin{lemma}
\label{lemma:2}
Suppose $B$ is a smooth divisor transversal to $D$. Then, 
$E_B:= E\otimes O_B$ is the canonical extension of $V_B: = V|_{B - B\cap D}$.
\end{lemma}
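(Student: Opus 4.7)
The plan is to verify that $E_B$, equipped with an appropriate connection $\nabla_B$, satisfies the two characterizing properties of Deligne's canonical extension recalled in the introduction: the flat sections of $\nabla_B$ on $B - B\cap D$ should recover $V_B$, and the residue eigenvalues along each component of $B\cap D$ should lie in $\{z : 0 \le \text{Re}(z) < 1\}$. Since $E$ is locally free and $B$ is a smooth divisor, $E_B = E \otimes O_B$ is already a vector bundle on $B$ of the correct rank, so I only need to produce the connection and verify the two axioms; uniqueness of the canonical extension then identifies $E_B$ with it.

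First I would construct $\nabla_B$ by composing $\nabla$ with the restriction map of Lemma~\ref{lemma:1}. The $O_X$-linear surjection $r: \Omega_X^1(\log D) \to \Omega_B^1(\log D\cap B)$ gives an $O_X$-linear map $(\mathrm{id}_E \otimes r) \circ \nabla : E \to E \otimes \Omega_B^1(\log D\cap B)$ whose target is an $O_B$-module, and a direct check shows this descends to a map $\nabla_B : E_B \to E_B \otimes \Omega_B^1(\log D\cap B)$: for $f \in O_X$ vanishing on $B$, $(\mathrm{id}\otimes r)(\nabla(fs)) = r(df)\otimes s + f\,(\mathrm{id}\otimes r)(\nabla s)$ vanishes modulo the ideal of $B$, since $r(df) = d(f|_B) = 0$. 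The Leibniz rule and integrability of $\nabla_B$ then follow from the corresponding properties of $\nabla$ together with the compatibility of $r$ with the exterior derivative.

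Next I would verify the two characterizing properties by a local computation. The problem is local on $B \cap D$, so by transversality and Theorem~\ref{thm:localDes}, I may arrange $X$ to be a polydisk with coordinates $z_1,\ldots,z_n$ such that $D = \{z_1\cdots z_s = 0\}$, $B$ is cut out by a coordinate transverse to $D$, and $V = V^1 \oplus \cdots \oplus V^r$ is a direct sum of rank-one unitary local systems. Each $E^i$ admits a preferred generator $\mu_i$ diagonalizing $\nabla$, with residue eigenvalues along each $D_j$ lying in the prescribed strip. Restricting the frame $\{\mu_i\}$ to $B$ yields a frame of $E_B$ diagonalizing $\nabla_B$, and the residue eigenvalues along $D_j \cap B$ are precisely the restrictions of those along $D_j$, hence still in the strip. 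Finally, the flat sections of $\nabla_B$ on $B - B \cap D$ are restrictions of flat sections of $\nabla$ on $X - D$; since both $V_B$ and $\ker \nabla_B$ are local systems of the same rank on $B - B\cap D$, the natural inclusion $V_B \hookrightarrow \ker \nabla_B$ is an isomorphism.

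The main obstacle, in my view, is ensuring that $\nabla_B$ is genuinely a logarithmic connection with poles only along $D\cap B$ rather than along $D\cap B + B$; this is exactly what the restriction map of Lemma~\ref{lemma:1} provides, and it depends critically on the transversality of $B$ with $D$. Once this is in place, the local verification of the residue condition and the identification of flat sections are essentially routine.
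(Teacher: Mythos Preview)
Your proposal is correct and follows essentially the same strategy as the paper: reduce to a polydisk with coordinates adapted to $D$ and $B$, write the connection in the form $d + \sum_i N_i\,\frac{dz_i}{z_i}$, and observe that restriction to $B$ leaves the residue matrices $N_i$ (hence their eigenvalues) unchanged while the flat sections restrict to those of $V_B$. The only presentational difference is that you first construct $\nabla_B$ globally via the restriction map of Lemma~\ref{lemma:1} and then verify the two axioms, whereas the paper works entirely in local coordinates and phrases the first axiom as an explicit comparison of monodromy representations through the map $\pi_1(B - B\cap D)\to\pi_1(X-D)$; these are equivalent formulations of the same check.
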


\begin{proof}
The statement is local, therefore we may assume $X$ is a polydisk
\[
  \polydisk
\]
such that the analytic coordinate of $\Delta_i$, for $i=1,\cdots,s$, are 
defining equation of $D_i$, and the analytic coordinate of $\Delta_n$ is
the defining equation of $B$.

First, we study $V_B$ by computing its monodromy representation: 

Let $T: \pi_1(X-D, x) \rightarrow \GL(r, \complex)$ be the monodromy
representation of $V$. For each generator $\gamma_i$ of $\pi_1(X-D, x)$,
let $\Gamma_i = T(\gamma_i)$. As $\Gamma_i$ are commuting and unitary,
we can use one matrix to diagolize all of them. Therefore, we can assume
all $\Gamma_i$ are diagonal matrices. Moreover, as $V$ is undefined 
only on $D$, so for each $i$, $\Gamma_i^{jj} = 1$, for $j=s+1,\cdots,n$.

Now, $B = \Delta_1\times\cdots\times\Delta_{n-1}$,
and the monodromy reprentation of $V|_{B - B\cap D}$ is given by
\[
  \pi_1(B-B\cap D) \xrightarrow{i} \pi_1(X-D) \xrightarrow{T}
  \GL(r, \complex)
\]
where $i$ is the natural inclusion map. It is clear that one can choose
the basis of $\pi_1(B - B\cap D)$ and $\pi_1(X - D)$ such that $i$ can be
realized as the identity map. Therefore, the monodromy representations
of $V_{B - B\cap D}$ are also $\Gamma_i$, for $i=1,\cdots,s$.

To show $E|_B$ is the canonical extension of $V_{B - B\cap D}$, we compute
the connection matrix of $E|_B$ and relate it to the monodromy representations
of $V|_{B - B\cap D}$. 

One can assume $E$ is trivial over $X$. Choose a local frame of $V$ on $X$, 
and use it as a trivialization of $E$. With respect to this trivialization, the
connection $\nabla$ can be realized as 
\[
  d + N_1\frac{dz_1}{z_1} + \cdots +  N_s\frac{dz_s}{z_s}
\]
where $N_1,\cdots,N_s$ are commuting matrices with eigenvalues in the
stripe 
\[
  \{z \in \complex | 0 \le \text{Re}z < 1 \}
\]
such that $e^{-2\pi iN_i} = \Gamma_i$.

Now, restrict $E$ to $B$, we see that the connection $\nabla|_B$ can still
be realized as
\[
  d + N_1\frac{dz_1}{z_1} + \cdots +  N_s\frac{dz_s}{z_s}
\]
As monodromy representations of $V_{B - B\cap D}$ are $\Gamma_i$,
it follows that $E|_B$ is the canonical extension of $V_{B - B\cap D}$.
\end{proof}

\begin{theorem}
\label{thm:unitaryVanishing}
Suppose $L$ is very ample on $X$. Then
\[
  H^q(X, E\otimes\logpform\otimes L) = 0
\]
for $p + q > \dim X$
\end{theorem}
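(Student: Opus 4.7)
The plan is to argue by induction on $n = \dim X$, with the case $n = 0$ being vacuous. For the inductive step I intend to exploit the slicing setup that Lemmas \ref{lemma:1} and \ref{lemma:2} have been prepared for, reducing the very ample vanishing on $X$ to an affine vanishing on $X$ relative to an enlarged boundary and to the inductive statement on a hyperplane section.

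Since $L$ is very ample, Bertini provides a smooth section $B \in |L|$ meeting $D$ transversally, so that $D + B$ is again a simple normal crossing divisor on $X$. Applying Lemma \ref{lemma:1} to the triple $(X, D, B)$ and tensoring the resulting short exact sequence with $E \otimes L$ (using $\mathcal{O}_X(-B) \otimes L \cong \mathcal{O}_X$) gives
\[
0 \to E \otimes \Omega^p_X(\log(D+B)) \to E \otimes \Omega^p_X(\log D) \otimes L \to E|_B \otimes \Omega^p_B(\log(D \cap B)) \otimes L|_B \to 0.
\]
I would pass to the long exact sequence in cohomology and aim to show the two outer terms vanish whenever $p + q > n$, which would sandwich the middle term to zero.

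For the left-hand term, the key point is that $X \setminus B$ is affine, since $B$ is very ample; hence $X \setminus (D + B)$, obtained from an affine by removing a divisor, is again affine. Because the monodromy of $V$ around $B$ is trivial, viewing $V$ as a local system on $X \setminus (D+B)$ and taking its canonical extension relative to the enlarged boundary $D + B$ recovers $E$ itself, the new residue along $B$ being zero and so still lying in the strip $\{0 \le \operatorname{Re}(z) < 1\}$. Corollary \ref{coro:artinVanishing}, applied now to the triple $(X, D+B, E)$, then kills the left-hand term in the desired range. For the right-hand term, Lemma \ref{lemma:2} identifies $E|_B$ as the canonical extension of $V|_{B - B \cap D}$ and $L|_B$ remains very ample on $B$; since $\dim B = n - 1$, the inductive hypothesis gives the vanishing for $p + q > n - 1$, which comfortably covers our range $p + q > n$.

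The main obstacle I anticipate is the bookkeeping around enlarging the log pole divisor from $D$ to $D + B$: one must check carefully that $E$ really is the canonical extension with respect to $D + B$ (so that the affine Artin vanishing of Corollary \ref{coro:artinVanishing} can be invoked in this new setting) and that the inclusion and restriction maps of Lemma \ref{lemma:1} interact correctly with the connection on $E$. Once this compatibility is in hand, the long exact sequence closes the induction and yields the stated vanishing.
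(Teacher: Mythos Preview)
Your proposal is correct and follows essentially the same route as the paper: pick a Bertini section $B\in|L|$, tensor the short exact sequence of Lemma~\ref{lemma:1} by $E\otimes L$, kill the left term via Corollary~\ref{coro:artinVanishing} applied to the enlarged boundary $D+B$ (using, exactly as you say, that $E$ is still the canonical extension because the residue along $B$ vanishes and $X\setminus(D+B)$ is affine), and kill the right term on $B$ by induction through Lemma~\ref{lemma:2}. The paper organizes the same two claims and handles the curve case separately as a base, whereas you start the induction at $n=0$; both are fine.
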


\begin{proof}
Let $B$ be a smooth divisor transversal to $D$ such that $L \cong O_X(B)$.
By Lemma \ref{lemma:1} we have the following exact sequence
\[
  0 \rightarrow \Omega_X^p(\log D + B) 
    \xrightarrow{i} \logpform\otimes O_X(B)
    \xrightarrow{r} \Omega^p_B(\log D\cap B)\otimes O_X(B)
    \rightarrow 0
\]

Tensor it by $E$ and take the cohomology sequence, we get:
\begin{align*}
  \cdots & H^q(X, \Omega^p_X(\log D + B)\otimes E) 
  \rightarrow H^q(X, \Omega^p_X(\log D)\otimes O_X(B)\otimes E) \\
  \rightarrow &H^q(X, \Omega^p_B(\log B\cap D)\otimes O_X(B)\otimes E)
  \cdots
\end{align*}

Therefore, to prove the theorem, it is enough to show

\textbf{Claim 1:} $H^q(X, \Omega^p_X(\log D + B)\otimes E) = 0$

\textbf{Claim 2:} $H^q(X, \Omega^p_B(\log B\cap D)\otimes O_X(B)\otimes E) = 0$
for $p + q > \dim X$.

\smallskip
\textbf{Proof of claim 1:}
Consider the maps
\[
  X - (B+D)\xrightarrow{f} X - B \xrightarrow{h}X
\]
\renewcommand{\L}{\mathcal{L}}

Let $V^o$ be the restriction of $V$ on $X-(B+D)$. 
The complex $\DR(D+B,E,\nabla)$ is quasi-isomorphic to 
$\mathbb{R}\pushforward {(h\circ f)}V^o$. Therefore,
\[
  H^k(X-(B+D), V^o) = 
  \hypercohomology^k(X, \DR(D+B,E,\nabla)
\]
The claim then follows from Corollary \ref{coro:artinVanishing}.

\textbf{End of Proof}

\smallskip
Claim 2 follows from 
induction on the dimension of the variety.

\bigskip
Now to finish the proof, it remains to show the base case of Claim 2.
One may assume now that $X$ is a smooth projective curve over $\complex$,

We need to show that 
\[
  H^1(X, \Omega_X(\log D)\otimes E\otimes L) = 0
\]
But for the curve case, 
$\Omega_X(\log D)\otimes O_X(B) = \Omega_X(\log D + B)$.
So the result follows again from Theorem \ref{coro:artinVanishing}
\end{proof}

Now suppose $L$ is any ample line bundle. Let $m$ be an integer such that
$\sheaftensor{L}{m}$ is very ample. Take a smooth divisor $B$ transversal 
to $D$ such that $\sheaftensor{L}{m} \cong O_X(B)$. Let $\varphi$ be the
local equation of $B$ on some affine open set, and let $\pi:X^{\prime} \rightarrow X$ 
be the normalization of $X$ in $\complex(X)(\varphi^{\frac{1}{m}})$.  

\begin{proposition}
Let $\pi: X^{\prime} \rightarrow X$, $B$ and $L$ be as above
\begin{enumerate}
\item $X^{\prime}$ is smooth.
\item $\pullback{\pi}B = m\tilde{B}$, where $\tilde{B} = (\pullback{\pi}B)_{red}$.
\item $D^{\prime}:=\pullback{\pi}D$ is a normal crossing divisor on $X^{\prime}$.
\item $\tilde{B}$ is transversal to $\pullback{\pi}D$.
\item $\pullback{\pi}\Omega^p_X(\log D) = 
  \Omega^p_{X^{\prime}}(\log D^{\prime})$.
\item $\pullback{\pi}E$ is the canonical extension of $\inverse{\pi}V$.
\end{enumerate}
\end{proposition}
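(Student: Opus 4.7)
All six claims are local on $X$, so I plan to work in local analytic coordinates around an arbitrary point $x \in X$. If $x \notin B$, then $\pi$ is \'etale of degree $m$ above $x$ and every assertion is inherited from the corresponding property of $X$; the real content therefore lies at points of $B$. Using the transversality of $B$ to $D$, choose coordinates $(z_1, \ldots, z_n)$ with $D = \{z_1 \cdots z_s = 0\}$ (allowing $s=0$ if $x \notin D$) and $B = \{z_n = 0\}$; since units on a polydisk admit holomorphic $m$-th roots, one may further absorb a unit and assume $\varphi = z_n$. The normalization $X'$ is then cut out by $t^m - z_n$, and the projection to $(z_1,\ldots,z_{n-1},t)$ exhibits $X'$ locally as a polydisk with $\pi$ given by $z_n = t^m$. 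In one stroke this yields (1) smoothness of $X'$, (2) $\pi^*B = V(t^m) = m\tilde B$, (3) $D' = \{z_1 \cdots z_s = 0\}$ in the upstairs coordinates and therefore SNC, and (4) $\tilde B = \{t = 0\}$ is transversal to $D'$.

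Step (5) is the main technical content, and I expect it to be the principal obstacle. In the adapted coordinates the computation gives $\pi^*(dz_i/z_i) = dz_i/z_i$ for $i \le s$, $\pi^* dz_j = dz_j$ for $s < j < n$, and crucially $\pi^*(dz_n/z_n) = m\, dt/t$. Taking wedge products, these pullbacks exhaust a local frame of $\Omega^p_{X'}(\log D')$ provided that the generator in the $z_n$-direction is read off as $dt/t$ rather than as the naive $\pi^* dz_n = m t^{m-1}\, dt$. The subtlety is the interplay between the ramification of $\pi$ along $B$ and the fact that $B$ is not part of the chosen log structure: the cleanest way to read the equality is to enlarge the log divisors to $D+B$ on $X$ and $D'+\tilde B$ on $X'$, for which $\pi$ becomes log-\'etale and pullback of log differentials is tautological. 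Once this bookkeeping is settled, higher $p$ follows by exterior powers.

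For (6), outside $\tilde B$ the cover is \'etale and the statement is immediate. At a point of $D \cap B$, choose a local frame for $E$ in which $\nabla = d + \sum_{i \le s} N_i\, dz_i/z_i$, with commuting matrices $N_i$ whose eigenvalues lie in $[0,1)$ and $e^{-2\pi i N_i} = \Gamma_i$ the local monodromies of $V$, as in the proof of Lemma~\ref{lemma:2}. Because $\pi$ fixes $z_1,\ldots,z_{n-1}$ and only replaces $z_n$ by $t^m$, the pulled-back connection has the same presentation $d + \sum_{i \le s} N_i\, dz_i/z_i$ upstairs, so its residue eigenvalues again lie in $[0,1)$. Meanwhile, a small loop around $\{z_i = 0\}$ in $X'$ for $i \le s$ maps isomorphically under $\pi$ to the corresponding loop in $X$, so the monodromy of $\pi^{-1}V$ around each component of $D'$ is again $\Gamma_i$. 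Hence $(\pi^*E, \pi^*\nabla)$ satisfies both defining properties of the canonical extension of $\pi^{-1}V$.
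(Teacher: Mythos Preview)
Your treatment of (1)--(4) and (6) is essentially the paper's own: both reduce to a polydisk, write the cover explicitly as $t^m=z_n$, read off smoothness and the divisor structure directly, and for (6) observe that the pulled-back connection is again $d+\sum_{i\le s}N_i\,dz_i/z_i$ with the same residue eigenvalues, while loops around the components of $D'$ map to loops around the components of $D$ so the monodromies agree. On these points nothing separates your outline from the paper.

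Part (5) is where there is a genuine gap. The paper writes only ``straightforward computation''; you actually carry out that computation and correctly note that $\pi^*dz_n=m\,t^{m-1}\,dt$. Your proposed fix---enlarge the log structure to $D+B$ downstairs and $D'+\tilde B$ upstairs, where $\pi$ becomes log-\'etale---establishes the \emph{different} identity $\pi^*\Omega^p_X(\log(D+B))\cong\Omega^p_{X'}(\log(D'+\tilde B))$, not (5). In fact (5) as written is false for $m\ge 2$: in your coordinates the natural map $\pi^*\Omega^1_X(\log D)\to\Omega^1_{X'}(\log D')$ sends the last frame element $dz_n$ to $m\,t^{m-1}\,dt$, so its cokernel is $\mathcal{O}_{X'}/(t^{m-1})$ in the $dt$-direction, supported on $\tilde B$. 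Your ``bookkeeping'' therefore does not close the argument; it quietly replaces the claim by the correct log-\'etale version. That version (or an equivalent reformulation) is what the cyclic-cover step in Corollary~\ref{coro:unitaryVanishing} actually needs, so the defect lies in the formulation of (5) rather than in your method---but as a proof of the proposition as stated, the step does not go through.
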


\begin{proof}
\newcommand{\upRing}{\frac{A_i[Y]}{(Y^m - f_i)}}
1. We will construct $X^{\prime}$ by constructing its affine cover and 
specefiying the gluing morphisms. Let $U_i = \Spec A_i$ be an affine cover
of $X$, and let $f_i$ be the defining equation of $D$ in $A_i$. 

For each $A_i$, $\frac{A_i[Y]}{(Y^m - f_i)}$ is integrally closed in 
$\complex(X)(f_i^{1/m})$. Therefore, 
\[
  U_i^{\prime}:= \Spec \frac{A_i[Y]}{(Y^m - f_i)}
\]
is the normalization of $U_i$ in $\complex(X)(f_i^{1/m})$

The same morphisms used to glue $U_i$ into $X$ can be used to glue
$U_i^{\prime}$ into $X^{\prime}$. Therefore, to show $X^{\prime}$ 
is smooth, it is enough to show $\frac{A_i[Y]}{(Y^m - f_i)}$ is 
a regular ring. 
\bigskip

2. The local defining equation of $\tilde{B}$ is $Y$, and $\pullback{\pi}(f_i) = Y^m$

\bigskip
3. To see this, we describe $\pullback{\pi}D$ in $\pullback{\pi}U$ for
any polydisk $U = \Delta_1\times\cdots\times\Delta_n$. 
If $B \cap U \neq \emptySet$, then construct $\Delta_i$ such that 
defining equation of $D_i$, for $i = 1,\cdots,s$, are coordinates
of $D_i$, for $i=1,\cdots,s$; and the defining equation of $B$
is the coordinate of $D_n$. Then, 
\[
  \pullback{\pi}U = \Delta_1\times\Delta_1\cdots\Delta_{n-1}\times \Sigma^m
\]
where $\Sigma^m$ is the $m$-sheeted cover over a complex disk branched over
the origin. In this case, $\pullback{\pi}D$ is still defined by 
$z_1\times z_2\times\cdots z_s$.

If $B \cap U = \emptySet$, then $\pullback{\pi}U$ is etale over $U$.
Therefore, $\pullback{\pi}D$ is etale over $D$. So $\pullback{\pi}D$ is
again a simple normal crossing divisor.   

\bigskip
4.This is clear from the case 1 of part 3.

\bigskip
5. Straighforward computation.
\bigskip
6. We compute the monodromy representation of $\inverse{\pi}V$ first: 

let $T: \pi_1(U-D, x) \rightarrow \GL(r, \complex)$ be the representation corresponding
to the local system $V$.

\textbf{Case 1:} 
Suppose $x \notin B$, then $\inverse{\pi}(U)$ is etale over $U$. Let $U^{\prime}$ be
an component of $\inverse{\pi}(U)$, and let $x^{\prime} \in U^{\prime}$ be a preimage 
of $x$.
Then,
\[
  T^{\prime} : \pi_1(U^{\prime} - D^{\prime}, x^{\prime}) 
  \xrightarrow{\pushforward{\pi}} \pi_1(U - D, x) 
  \xrightarrow{T} \GL(r, \complex)
\] 
is the representation corresponding to $\inverse{\pi}V$.

\textbf{Case 2:} Suppose $x \in B$, then use the description from part 3, 
we know that 
\[
  \inverse{\pi}U = \Delta_1\times \Delta_2 \times \cdots \times \Sigma^m
\]

In both cases,
$\inverse{\pi}U - D^{\prime}$ is homotopic to $S_1\times S_2\times\cdots\times S_s$
So we can define generators of $\pi_1(U^{\prime} - D^{\prime}, x^{\prime})$
and $\pi_1(U - D, x)$ such that $\pushforward{\pi}$ is the identity map. 

To show $\pullback{\pi}E$ is the canonical extension of $\inverse{\pi}V$,
we only need to compute the connection matrix of $\pullback{\pi}E$ and relate
it to the monodromies of $\inverse{\pi}V$:

Let $\gamma_i$ be a small circle around $D_i$, and let $\Gamma_i$ be the monodromy 
$T(\gamma_i)$. 
As
\[
  \pushforward{\pi}: \pi_1(U^{\prime} - D^{\prime}, x^{\prime})
  \rightarrow \pi_1(U - D, x)
\]
is the identity map, $\Gamma_i$ are also
the monodromy representations of $\inverse{\pi}V$.
Next, we compute the connection matrix of $E$. Let $U$ be small enough
so that $E$ is trivial over it. Choose a local frame of $V$, and use it
as a trivialization of $E$. With respect to this trivialization, the
connection $\nabla$ can be realized as 
\[
  d + N_1\frac{dz_1}{z_1} + \cdots +  N_s\frac{dz_s}{z_s}
\]
where $N_1,\cdots,N_s$ are commuting matrices with eigenvalue in the
stripe 
\[
  \{z \in \complex | 0 \le \text{Re}z < 1 \}
\]
such that $e^{-2\pi iN_i} = \Gamma_i$.

As $\pullback{\pi}z_i = z_i$, for $i=1,\cdots,s$, we see that the 
$\pullback{\pi}\nabla$ over $\inverse{\pi}U$ can be realized as:
\[
  d + N_1\frac{dz_1}{z_1} + \cdots +  N_s\frac{dz_s}{z_s}
\]
This shows that $\pullback{\pi}E$ is the canonical extension of $\inverse{\pi}V$.

\end{proof}

\begin{corollary}
\label{coro:unitaryVanishing}
For any ample line bundle $L$ on $X$,
\[
  H^q(X, E\otimes\logpform\otimes L) = 0
\]
for $p + q > \dim X$
\end{corollary}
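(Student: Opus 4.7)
The plan is to reduce to Theorem \ref{thm:unitaryVanishing} by base-change along the cyclic cover $\pi: X' \to X$ constructed in the preceding proposition. I would first fix $m$ so that $L^{\otimes m}$ is very ample, use Bertini to choose $B \in |L^{\otimes m}|$ smooth and transversal to $D$, and build the degree-$m$ cover $\pi: X' \to X$. The proposition then supplies everything needed on $X'$: smoothness of $X'$, that $D' := \pi^*D$ is simple normal crossing, transversality of $\tilde B$ with $D'$, the identity $\pi^*\Omega^p_X(\log D) = \Omega^p_{X'}(\log D')$, and the fact that $\pi^*E$ is the canonical extension of the unitary local system $\pi^{-1}V$.

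The next step is to observe that $\mathcal{O}_{X'}(\tilde B)$ is ample on $X'$ (its $m$-th power is the pullback of the ample $L^{\otimes m}$) and already admits the smooth transversal representative $\tilde B$. Inspecting the proof of Theorem \ref{thm:unitaryVanishing}, its conclusion depends on $L$ only through the existence of such a representative --- very-ampleness was used solely to produce $B$ via Bertini. I would therefore re-run that argument on $X'$ with $\mathcal{O}_{X'}(\tilde B)$ in place of $L$ to obtain
\[
  H^q\!\left(X',\, \pi^*E \otimes \Omega^p_{X'}(\log D') \otimes \mathcal{O}_{X'}(\tilde B)\right) = 0
  \quad \text{for } p+q > \dim X.
\]
The only point to double-check is that $X' - (\tilde B + D')$ remains affine for Claim 1; this follows because $m(\tilde B + D') = \pi^*(B + mD)$ is the pullback of an ample divisor, so $\tilde B + D'$ is itself ample and its complement is affine. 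The induction on dimension in Claim 2 is available by applying the present corollary (inductively) on $\tilde B$.

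Finally, I would descend the vanishing from $X'$ to $X$ by the projection formula. Since $\pi$ is finite, $R^{i}\pi_* = 0$ for $i > 0$ and
\[
  H^q\!\left(X',\, \pi^*E \otimes \Omega^p_{X'}(\log D') \otimes \mathcal{O}_{X'}(\tilde B)\right)
  = H^q\!\left(X,\, E \otimes \Omega^p_X(\log D) \otimes \pi_*\mathcal{O}_{X'}(\tilde B)\right).
\]
A standard cyclic-cover calculation --- comparing with $\pi_*\mathcal{O}_{X'} = \bigoplus_{i=0}^{m-1} L^{-i}$ via the short exact sequence $0 \to \mathcal{O}_{X'} \to \mathcal{O}_{X'}(\tilde B) \to \mathcal{O}_{\tilde B}(\tilde B) \to 0$ and tracking the Galois $\mu_m$-weights --- should identify
\[
  \pi_*\mathcal{O}_{X'}(\tilde B) \;\cong\; \mathcal{O}_X \oplus L^{-1} \oplus \cdots \oplus L^{-(m-2)} \oplus L,
\]
so $L$ appears as a direct summand. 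Consequently $H^q(X, E \otimes \Omega^p_X(\log D) \otimes L)$ embeds into the vanishing group on the right, and the corollary follows.

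The step I expect to be most delicate is the identification of $\pi_*\mathcal{O}_{X'}(\tilde B)$, and in particular the confirmation that $L$ itself (rather than some $m$-torsion twist) appears as a summand. Intuitively, multiplication by the defining section $\varphi \in \Gamma(X, L^{\otimes m})$ of $B$ induces an embedding $L^{-(m-1)} \hookrightarrow L$ with cokernel $L|_B$, and this is exactly the replacement that passes from $\pi_*\mathcal{O}_{X'}$ to $\pi_*\mathcal{O}_{X'}(\tilde B)$ in the top $\mu_m$-eigenspace. Once this identification is in hand, the rest of the argument is formal.
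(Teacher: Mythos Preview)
Your approach is correct and matches the paper's: build the cyclic cover, apply the vanishing upstairs, and descend by finiteness of $\pi$ and the projection formula. The one place where the paper is more efficient is your ``delicate step.'' The paper observes that $\mathcal{O}_{X'}(\tilde B)\cong\pi^*L$ (this is the standard identity for a degree-$m$ cyclic cover branched along $B\in|L^{\otimes m}|$), so the sheaf upstairs is literally $\pi^*(E\otimes\Omega^p_X(\log D)\otimes L)$; then the projection formula gives $\pi_*\pi^*(-)\cong\pi_*\mathcal{O}_{X'}\otimes(-)$ with $\pi_*\mathcal{O}_{X'}\cong\bigoplus_{i=0}^{m-1}L^{-i}$, and the $i=0$ summand already hands you $L$. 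This replaces your eigenspace/short-exact-sequence computation of $\pi_*\mathcal{O}_{X'}(\tilde B)$ by a one-line application of the projection formula, and also makes it unnecessary to re-inspect the proof of Theorem~\ref{thm:unitaryVanishing} for whether very-ampleness was essential. A minor remark: your argument that $X'-(\tilde B+D')$ is affine via ampleness of $\tilde B+D'$ is more than needed; it is simply $\pi^{-1}\bigl(X-(B+D)\bigr)$, a finite cover of an affine variety (the complement of the hypersurface $D$ inside the affine $X-B$), hence affine.
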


\begin{proof}
Let $m$, $B$ and $\pi: X^{\prime} \rightarrow X$ be as above. By 
Theorem \ref{thm:unitaryVanishing} 
\[
  H^q(X^{\prime}, \pullback{\pi}(E\otimes\logpform\otimes L)) = 0
\]
for $p + q > \dim X^{\prime} = \dim X$.

$\pi: X^{\prime} \rightarrow X$ is a finite morphism, so for $i > 0$,
$R^i\pushforward {\pi}\mathscr{F} = 0$ for any coherent sheaf 
$\mathscr{F}$ on $X^{\prime}$. This implies
\begin{align*}
   H^q(X^{\prime}, \pullback{\pi}(E\otimes\logpform\otimes L)) 
  & = H^q(X, \pushforward{\pi}(\pullback{\pi}(E\otimes\logpform\otimes L))) \\
  & = H^q(X, \pushforward{\pi}(O_Y)\otimes E\otimes\logpform\otimes L) \\
  & = 0
\end{align*}
for $p + q > \dim X$. The second equality follows from the projection formula.

As $\pushforward{\pi}(O_Y) \cong \bigoplus\limits_{i=0}^{m-1} 
O_X(-\sheaftensor{L}{i})$, the result follows.
\end{proof}

\section{Partial weight filtration}
In the previous section, we proved the vanishing theorem for
the complex 
\[
  \DR_X(D, E)\otimes O_X(B)
\]
where $B$ is a smooth very ample divisor transversal to $D$. 
The intermediate step for the proof is the vanishing theorem for
the complex 
\[
  \DR(D+B, E)
\]

In this section,
we define a partial weight filtration on the 
The complex 
\[
  \DR_X(D+B, E)
\]
It is a more refined weight filtration than the one
defined in Section \ref{sect:weightfiltration}, and it will be used to prove
the vanishing theorem for the graded complex
\[
  \Gr^W_. \DR_X(D, E)
\]

For simplicity, suppose $V$ is 
a rank 1 unitary local system. We will define partial weight filtration by
giving local description of forms. 
Then, we will show it is a well-defined global notion after
Theorem \ref{thm:partialWeightFiltration}.
Let $\mu$ be a section of $E$. Recall that
$W_m\Omega^p_X(\log D)\otimes E$ consists of sections of the form
\[
  \omega\otimes\mu
\]
where $\omega$ can be written as
\[
  \frac{dz_{j_1}}{z_{j_1}}\wedge\frac{dz_{j_2}}{z_{j_2}}\wedge
  \cdots\frac{dz_{j_k}}{z_{j_k}}\wedge dz_{j_{k+1}} \wedge\cdots
  \wedge dz_{j_p}
\]
Moreover, $\omega$ has at most $m$ log forms acting on $V$ by identity.

Now let $W_m^{D^1}\DR(D,E)$ be the set of form that can be written as
$\omega\otimes\mu$, where $\omega$ can be written as
\[
  \frac{dz_{j_1}}{z_{j_1}}\wedge\frac{dz_{j_2}}{z_{j_2}}\wedge
  \cdots\frac{dz_{j_k}}{z_{j_k}}\wedge dz_{j_{k+1}} \wedge\cdots
  \wedge dz_{j_p}
\]
Moreover, let $g$ be the cardinality of the following set
\[
  \{\text{log forms in $\omega$ acting on $V$ by identity}\} \cap 
  \{\frac{dz_2}{z_2},\cdots,\frac{dz_s}{z_s}\}
\]
Then $g \le m$.

To generalize, $W_m^{D^{i_1}+\cdots D^{i_l}}\DR(D,E)$ is the set of
forms that can be written as $\omega\otimes\mu$, where $\omega$ can be written
as
\[
  \frac{dz_{j_1}}{z_{j_1}}\wedge\frac{dz_{j_2}}{z_{j_2}}\wedge
  \cdots\frac{dz_{j_k}}{z_{j_k}}\wedge dz_{j_{k+1}} \wedge\cdots
  \wedge dz_{j_p}
\]
Moreover, let $g$ be the cardinality of the following set
\[
  \{\text{log forms in $\omega$ acting on $V$ by identity}\} \cap 
  (\{\frac{dz_1}{z_1},\cdots, \frac{dz_s}{z_s}\} - 
  \{\frac{dz_{i_1}}{z_{i_1}},\cdots \frac{dz_{i_l}}{z_{i_l}} \})
\]

Write $T$ for $D + B$. Let 
$T_2$ be the union of 2-fold intersections of components of $T$. 

Let $v_1: \tilde D_1 \rightarrow D$ be the normalization map, 
i.e. $\tilde D_1$
is the disjoint union of components of $D$. 
Let $F_1$ be $\pullback{v_1}T_2$.
Then, $F_1$ is a normal crossing divisor in $D_1$.
  
We have seen in Section \ref{sect:residuemap} that 
the restiction of $\pushforward jV$ on $D_1 - T_2$ is a unitary local system, 
denote it by $V_1$;
and let $E_1$ be the subbundle of $\pullback v_1 E$ which is the canonical
extension of $V_1$.

\begin{proposition}
\label{prop:pwfExactSequence}
There is an exact sequence 
\[
  0 \rightarrow W^B_0\DR(D+B,E,\nabla) \xrightarrow{i} 
    \DR(D+B,E,\nabla) \xrightarrow{res} \pushforward {v_1}\DR(F_1,E_1,\nabla_1)
    \rightarrow 0
\] 
where $i$ is the inclusion map, and $res$ is the residue map.
\end{proposition}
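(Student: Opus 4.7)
The plan is to check exactness locally on a polydisk, identify $\ker(res)$ with $W^B_0\DR(D+B,E,\nabla)$ directly from the local description preceding the statement, and construct explicit local lifts for surjectivity. Work on a polydisk $X = \polydisk$ with $D = \{z_1 \cdots z_s = 0\}$, and if $B$ meets the polydisk arrange coordinates so that $B = \{z_n = 0\}$. Since $V$ has rank $1$, let $\mu$ be a local generator of $E$, and let $\lambda_1,\ldots,\lambda_s$ be the monodromies of $V$ around $D^1,\ldots,D^s$. Injectivity of $i$ is then immediate from the definition of $W^B_0$ as a subsheaf.

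For the identification $\ker(res) = W^B_0\DR(D+B,E,\nabla)$, note that $res$ decomposes by construction as a sum over components $D^j$ of $D$ (not of $B$) of ``residue along $D^j$ followed by projection onto $E_1$''. On a local generator
\[
  \omega \otimes \mu = \frac{dz_{j_1}}{z_{j_1}} \wedge \cdots \wedge \frac{dz_{j_k}}{z_{j_k}} \wedge dz_{j_{k+1}} \wedge \cdots \wedge dz_{j_p} \otimes \mu,
\]
the $D^j$-component is nonzero iff $dz_j/z_j$ appears in $\omega$ \emph{and} $\lambda_j = 1$ (otherwise either the residue along $D^j$ is $0$, or the projection to $E_1$ kills $\mu$, since $E_1|_{D^j} = 0$ when $\lambda_j \neq 1$); the presence of $db/b$ in $\omega$ is irrelevant since $res$ takes no residue along $B$. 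Hence $res(\omega \otimes \mu) = 0$ iff no log form $dz_j/z_j$ with $j \le s$ acts on $V$ by identity, which is exactly the local description of $W^B_0$. For surjectivity, argue stalk-wise at a point $x \in D^j$: a nonzero section $\beta \otimes \nu$ of $\pushforward{v_1}\DR(F_1,E_1,\nabla_1)$ forces $\lambda_j = 1$, so $E_1|_{D^j} = E|_{D^j}$ and $\nu$ lifts to a local section $\tilde\nu$ of $E$; picking any lift $\tilde\beta \in \Omega^{p-1}_X(\log D+B)$ of $\beta$, the section $\frac{dz_j}{z_j} \wedge \tilde\beta \otimes \tilde\nu$ maps to $\beta \otimes \nu$ under the $D^j$-component of $res$.

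The main technical point is to verify that $res$ is a morphism of complexes, i.e.\ that it commutes with $\nabla$ and $\nabla_1$. This follows from the lemma at the end of Section \ref{sect:residuemap} applied with $m = 1$ and divisor $T = D + B$: that lemma gives $\nabla$-compatibility of the full residue map to the normalization of $T$, and restricting to the summands arising from components of $D$ preserves this compatibility because $\nabla_1$ respects the splitting of the normalization into parts arising from $D$ and from $B$. As a byproduct, exhibiting $\ker(res)$ as an intrinsically-defined kernel proves that $W^B_0\DR(D+B,E,\nabla)$ is a well-defined global subcomplex, independent of the choice of local coordinates.
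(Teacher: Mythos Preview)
Your proposal is correct and follows essentially the same approach as the paper: reduce to a local check on a polydisk with $V$ of rank $1$, and verify exactness directly from the explicit local description of $W^B_0$. The paper further simplifies by assuming $D$ has a single component and treating only the case where $dz_1/z_1$ acts by identity (the complementary case, where $E_1$ vanishes on that component, is trivial and left implicit); your version keeps several components, handles both monodromy cases explicitly, and adds the remarks on $\nabla$-compatibility and global well-definedness that the paper defers to later in the section.
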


\begin{proof}
Suppose for simplicity $D$ is smooth, i.e. $D$ has only one component.
Also, suppose $V$ is a unitary local system of rank 1. Let $\mu$ be a 
local section of $E$. 

Let $z_1$ be the local equation of $D$. Suppose $\frac{dz_1}{z_1}$ acts
on $V$ by identity, then $V$ extends to a unitary local system on 
$D - D\cap B$. In this case, $D_1 = D$, and $F_1 = D\cap B$. 
Let $z_n$ be the local equation for $B$. Then, locally over a polydisk

\smallskip
1. $W^B_0\Omega^p_X(\log D+B)\otimes E$ is generated by sections of the form
\[
  \frac{dz_n}{z_n}\wedge\omega\otimes \mu
\]
where $\omega \in \Omega^{p-1}_X$.

\smallskip
2. $\Omega^p_X(\log D+B)$ is generated by sections of the form
\[
  \frac{dz_n}{z_n}\wedge\omega\otimes\mu
\]
where $\omega \in \Omega^{p-1}_X(\log D)$.

\smallskip
3. $\Omega^{p-1}_{D_1}(\log F_1)\otimes E_1$ is generated by sections
of the form
\[
  \frac{dz_n}{z_n}\wedge\omega\otimes\mu_1
\]
where $\omega \in \Omega_{D_1}^{p-2}(\log F_1)$.

Use the local description, it is clear that the sequence is exact.

\end{proof}

\begin{theorem}
\label{thm:partialWeightFiltration}
Let $(E_B,\nabla_B)$ be the restriction of $(E,\nabla)$ on $B$, and let 
$\DR(B\cap D, E_B,\nabla_B)$ be the complex
\[
  0 \rightarrow E_B \rightarrow \Omega_B^1(B\cap D)\otimes E_B
  \cdots
\]
then there is an exact sequence of complexes
\begin{align*}
    0 
    \rightarrow W^B_m\DR_X(D+B, E) 
    \xrightarrow{i} W_m\DR_X(D, E)\otimes O_X(B) \\
    \xrightarrow{r} W_m\DR_B(D\cap B, E_B)\otimes O_X(B)
    \rightarrow 0 
\end{align*}
$i$ is the inclusion map, and $r$ is the restriction map.
\end{theorem}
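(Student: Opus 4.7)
The statement is local on $X$, so I would work in a polydisk with coordinates $z_1,\dots,z_n$ in which $D$ is cut out by $z_1\cdots z_s=0$ and $B$ by $z_n=0$. By the diagonalization argument of Theorem~\ref{thm:localDes}, I may assume $V$ is a direct sum of rank-one unitary local systems, and after fixing local generators $\mu$ of $E$ the problem reduces to a bookkeeping exercise on basis wedges of logarithmic differentials. The essential local observation is the unique decomposition of any $\omega\in\Omega^p_X(\log D+B)$ as $\omega=\omega_0+(dz_n/z_n)\wedge\omega_1$ with $\omega_0\in\Omega^p_X(\log D)$ and $\omega_1\in\Omega^{p-1}_X(\log D)$ not containing $dz_n$ in its basis expansion; from the local description of $W^B_m$, in which $dz_n/z_n$ is excluded from the identity-action count, one reads off that $\omega\in W^B_m$ exactly when both $\omega_0$ and $\omega_1$ lie in $W_m\Omega^*_X(\log D)$.

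With this in hand, the maps are inherited from Lemma~\ref{lemma:1}: the inclusion $i$ is the canonical embedding $\Omega^p_X(\log D+B)\hookrightarrow\Omega^p_X(\log D)\otimes O_X(B)$ tensored with $\mathrm{id}_E$, sending $\omega_0+(dz_n/z_n)\wedge\omega_1$ to $z_n^{-1}(z_n\omega_0+dz_n\wedge\omega_1)$; the restriction $r$ is the restriction of Lemma~\ref{lemma:1} tensored with $\mathrm{id}_{O_X(B)}$ and $\mathrm{id}_E$, and Lemma~\ref{lemma:2} identifies the target with $\Omega^p_B(\log D\cap B)\otimes E_B\otimes O_X(B)$. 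Both maps commute with the connection: $i$ embeds one subsheaf of meromorphic forms into another carrying the same $\nabla$, and $r$ commutes with $d$ and with the connection by Lemma~\ref{lemma:2}, which identifies the restricted $\nabla$ with $\nabla_B$.

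To verify filtered exactness I would split $W_m\Omega^p_X(\log D)$ along the $dz_n$-grading and argue on each piece. Preservation of filtrations by $i$ is immediate, since $W_m$ is closed under multiplication by $z_n$ and under wedging with $dz_n$; preservation by $r$ is equally immediate, because the $dz_n$-containing part of $W_m$ is killed and the $dz_n$-free part restricts to $B$ with the same identity-action count, using that by Lemma~\ref{lemma:2} the monodromies of $V_B$ along $D\cap B$ match those of $V$ along $D$. Exactness at the middle term builds on the non-filtered exact sequence (Lemma~\ref{lemma:1} tensored with the locally free $E\otimes O_X(B)$): given $z_n^{-1}\eta\in W_m\otimes O_X(B)$ with $r(z_n^{-1}\eta)=0$, the decomposition $\eta=\eta_0+dz_n\wedge\eta_1$ into $dz_n$-free parts combined with the vanishing of $\eta_0|_B$ forces $\eta_0=z_n\tilde\eta_0$, while $\eta\in W_m$ together with the direct-sum structure forces $\tilde\eta_0,\eta_1\in W_m$; hence $z_n^{-1}\eta=\tilde\eta_0+(dz_n/z_n)\wedge\eta_1$ comes from $W^B_m$. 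Surjectivity of $r$ follows by lifting a generator of $W_m\Omega^p_B(\log D\cap B)\otimes E_B$ to a compatible basis element of $W_m\Omega^p_X(\log D)\otimes E$ and dividing by $z_n$.

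The main technical obstacle is the $dz_n$-grading bookkeeping: one must know that $W_m\Omega^p_X(\log D)$ splits as a direct sum of its $dz_n$-free and $dz_n$-wedge summands, and that multiplication by the regular element $z_n$ is injective on $W_m$. Once these structural facts are in place, the filtered exact sequence drops out of the local description of $W^B_m$ and $W_m$.
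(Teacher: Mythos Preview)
Your approach is essentially the same as the paper's: reduce locally to rank one on a polydisk, write down explicit generators of the three filtered pieces in terms of the $dz_j/z_j$ basis, and read off exactness from the inclusion $\frac{dz_n}{z_n}\otimes z_n\mapsto dz_n$ and restriction to $B$. You supply considerably more detail than the paper's sketch---in particular your decomposition $\omega=\omega_0+(dz_n/z_n)\wedge\omega_1$ and the resulting characterization of $W^B_m$ make the exactness verification transparent, whereas the paper simply records the local generators and leaves the check to the reader.
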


\begin{proof}
For simplicity, we assume $E$ has rank 1. The statement is local, so we 
work on a polydisk, and we use the notation from above. Let $\mu$ be 
a generating section of $E$, then

\bigskip
1. $W^B_m\DR_X(D+B,E)\otimes O_X(-B)$ is generated by
\[
  \omega\otimes\mu\otimes z_n
\]
where $\omega \in \Omega^p_X(\log D + B)$ is a $p$-form that has at most
$m$ log forms coming from 
\[
  \{\frac{dz_1}{z_1},\cdots,\frac{dz_s}{z_s}\}
\]
acting on $E$ by identity.

\smallskip
2. $W_m\DR_X(D,E)$ is generated by
\[
  \omega\otimes\mu
\]
where $\omega \in \Omega^p_X(\log D)$ is a $p$-form that has at most $m$
log forms acting on $E$ by identity.

\smallskip
3. $W_m\DR_B(D\cap B,E_B)$ is generated by 
\[
  \omega\otimes \mu
\]
where $\omega \in \Omega_B^p(\log B\cap D)$ is a $p$-form that
has at most $m$ log forms acting on $\mu_B$ by identity.

The map $i$ is the natural inclusion map, 
i.e. $\frac{dz_n}{z_n}\otimes z_n \mapsto dz_n$;
The map $r$ is the restriction on $B$.

\end{proof}

The above theorem also gives a description of 
\[
  W_m^B\DR_X(D+B,E)
\]
as the kernel of the restriction map
\[
  r: \DR_X(D+B,E)\otimes O_X(B) \rightarrow 
     W_m\DR_B(B\cap D, E_B)\otimes O_X(B)
\]
It means that $W_m\DR_X(D+B,E)$ is indeed globally well-defined.

\section{Mixed Hodge Structure on the Complex $W^B_0\DR(D+B,E,\nabla)$}
\newcommand{\hodgelattice}{\derivedpushforward h \pushforward f V^o_{\real}}
\newcommand{\canonicalfiltration}{
  (\derivedpushforward h \pushforward f V^o_{\real}, \tau)
}
\newcommand{\bifilteredcomplex}{
  (\bcomplex, F^., W_.)
}  
\newcommand{\cmhc}{(\hodgelattice, \canonicalfiltration, \bifilteredcomplex)}

Throughout this section, we assume the unitary local system $V$ has a real
lattice $V_{\real}$ such that 
\[
  V = V_{\real}\otimes \complex
\]

We will study the mixed Hodge structure on 
the complex
\[
  W^B_0\DR_X(D+B,E)
\]

Consider the maps
\[
  X - (D + B) \xrightarrow{f} X - B \xrightarrow{h} X
\]
Write $V^o$ (resp. $V^o_{\real}$) for the restriction of $V$ (resp. $V_{\real}$)
on $X - (D+B)$.

Let $\tau$ be the canonical filtration on 
$\derivedpushforward h \pushforward f V^o_{\real}$; let $W$ be the 
increasing filtration on $W^B_0\DR_X(D+B,E)$ defined as

\begin{gather*}
W_m \bcomplex = 
\begin{cases}
    0 & \text{ if } m < 0 \\
    W_0\DR_X(D+B,E) & \text{ if } m = 0\\
    \bcomplex & \text{ if } m > 0
\end{cases}
\end{gather*}

The main result of this section is 
\begin{theorem}
\label{thm:MHSPartialWeightFiltration}
\[
  \cmhc
\]
is a $\real$-cohomological mixed Hodge complex.
\end{theorem}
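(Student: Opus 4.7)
The plan is to verify the CMHC axioms for $\cmhc$ by analyzing the two nontrivial graded pieces of each filtration and matching them against each other as $\real$-cohomological Hodge complexes. Both filtrations have only two nontrivial steps: on the complex side, $W_0 = W_0\DR_X(D+B,E)$ and $W_1 = \bcomplex$ by definition, and on the real side $\tau_0 = \pushforward{(h\circ f)}V^o_\real$ and $\tau_1 = \hodgelattice$, since $R^i h_* = 0$ for $i \geq 2$ as $h$ is the complement of a smooth divisor.

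First, I would establish the underlying filtered quasi-isomorphism $\bcomplex \cong \hodgelattice \otimes \complex$. The idea is to apply Timmerscheidt's CMHC theorem on the open subset $X-B$ (with local system $V^o$ and canonical extension $E|_{X-B}$), which gives a quasi-isomorphism between the weight-zero subcomplex $W_0\DR_{X-B}(D-D\cap B, E|_{X-B})$ and $\pushforward f V^o_\complex$, and then apply $\derivedpushforward h$. The higher direct images $R^i h_* \Omega^p_{X-B}(\log(D-D\cap B))\otimes E|_{X-B}$ vanish for $i > 0$, and the degree-zero pushforward acquires an additional log pole along $B$ to yield $\Omega^p_X(\log(D+B))\otimes E$; the subcomplex $W^B_0\DR_X(D+B,E) = \bcomplex$ is precisely the image, matching the pushforward of $W_0$.

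Next I would identify the graded pieces on each side. On the complex side, $\Gr^W_0 \bcomplex = W_0\DR_X(D+B,E)$ directly, and the residue along $B$ induces a quasi-isomorphism
\[
\Gr^W_1 \bcomplex \xrightarrow{\Res_B} \pushforward{\iota_B}\bigl(W_0\DR_B(D\cap B, E_B)\bigr)[-1],
\]
using Lemma \ref{lemma:2} to identify $E_B$ as the canonical extension of $V|_{B-B\cap D}$. On the topological side, $\Gr^\tau_0 = \pushforward{(h\circ f)}V^o_\real$ and $\Gr^\tau_1 = R^1 h_* \pushforward f V^o_\real[-1]$; a local analysis near $B$ shows the latter is supported on $B$ and equals $\pushforward{\iota_B}$ of the pushforward of $V_\real|_{B-B\cap D}$ to $B$, reflecting that $V$ has trivial monodromy across $B$. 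Each matched pair is then a CHC of the prescribed weight: the $m=0$ pair is the weight-zero part of Timmerscheidt's CMHC attached to $(X, D+B, E)$, and the $m=1$ pair, after $\pushforward{\iota_B}$ and the shift by $[-1]$, comes from the weight-zero part of Timmerscheidt's CMHC attached to $(B, D\cap B, E_B)$.

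The principal obstacle is verifying that the residue map $\Res_B$ on the complex side is compatible with the boundary morphism in the distinguished triangle defined by $\tau$ on the topological side. This reduces to a local coordinate computation near a point of $D\cap B$: choose coordinates so that $D = \{z_1\cdots z_s = 0\}$ and $B = \{z_n = 0\}$, and trace explicitly how a form containing the factor $\frac{dz_n}{z_n}$ maps under $\Res_B$ to a form on $B$, matching this against the topological connecting homomorphism coming from the triangle $\tau_0 \to \tau_1 \to \Gr^\tau_1$. Once this compatibility and the identification of the real lattice of $\Gr^\tau_1$ are secured, the two CHCs produced above supply the required axiom for each weight, completing the verification.
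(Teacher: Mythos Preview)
Your overall architecture matches the paper's: reduce to the two graded pieces $m=0,1$, identify $\Gr^W_0$ with $W_0\DR_X(D+B,E)$ and $\Gr^W_1$ with $\pushforward{\iota_B}W_0\DR_B(D\cap B,E_B)[-1]$ via the residue along $B$, and then invoke Timmerscheidt's weight-zero result on $X$ and on $B$ respectively to get the CHC axiom. That part is fine and is exactly what the paper does.

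The genuine gap is in your first step, the construction of the comparison quasi-isomorphism. You assert that $h_*\bigl(\Omega^p_{X-B}(\log D)\otimes E|_{X-B}\bigr)$ ``acquires an additional log pole along $B$ to yield $\Omega^p_X(\log(D+B))\otimes E$.'' This is false as a statement about sheaves: since $h$ is the open immersion of the complement of a divisor, the coherent pushforward $h_*$ allows poles of \emph{arbitrary} order along $B$, so $h_*\bigl(\Omega^p_{X-B}(\log D)\otimes E\bigr)=\Omega^p_X(\log D)(*B)\otimes E$, not the log complex. Consequently your identification of $\bcomplex$ with $Rh_*$ of the holomorphic weight-zero complex on $X-B$ does not go through termwise, and you have not produced a map, let alone a filtered quasi-isomorphism.

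The paper avoids this by arguing on the level of resolutions rather than coherent pushforward. It first shows the \emph{unfiltered} quasi-isomorphism $\derivedpushforward h\pushforward f V^o\simeq \bcomplex$ by resolving $\pushforward f V^o$ on $X-B$ by the $C^\infty$ de Rham complex (flasque, so $\derivedpushforward h=\pushforward h$), and then invoking the Griffiths--Deligne comparison to pass back to logarithmic forms on $X$. It then proves separately, by a direct local computation of the cohomology sheaves of $\Gr^\tau_m$ and $\Gr^W_m$ using the short exact sequence
\[
0\to W_0\DR_X(D+B,E)\to \bcomplex \xrightarrow{\Res_B} W_0\DR_B(B\cap D,E_B)[-1]\to 0,
\]
that the identity map $(\bcomplex,\tau)\to(\bcomplex,W)$ is a filtered quasi-isomorphism. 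This second step is what replaces your proposed ``compatibility of $\Res_B$ with the topological connecting map'': rather than chasing that compatibility through the real and complex sides, the paper simply transports $\tau$ across the already-established unfiltered quasi-isomorphism and compares the two filtrations on a single complex. If you repair your first step along these lines, the rest of your outline goes through essentially as written.
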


\begin{proposition}
$\mathbb{R}\pushforward h(\pushforward{f}V^o)$ is quasi-isomoprhic to 
\[
  W^B_0\DR_X(D + B, E)
\]
\end{proposition}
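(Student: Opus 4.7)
The plan is to factor the standard Deligne identification $\mathbb{R}\pushforward{(h\circ f)}V^o \simeq \DR_X(D+B,E)$ through the intermediate sheaf $\pushforward{f}V^o$, and to recognize that the discrepancy between $\mathbb{R}\pushforward{f}V^o$ and $\pushforward{f}V^o$ is precisely what the subcomplex $W^B_0$ cuts out inside the full log de Rham complex.

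Since the assertion is local on $X$, I would work on a polydisk $\Delta$ in which $D$ is cut out by $z_1 \cdots z_s = 0$ and, when $\Delta$ meets $B$, $B$ by $z_n = 0$. Using unitarity as in Theorem \ref{thm:localDes}, decompose $V = \bigoplus V^i$ into rank $1$ unitary summands and argue one summand at a time. Fix $V^i$ with monodromy $\lambda_j$ around $D_j$, let $D' := \bigcup_{\lambda_j \ne 1} D_j$ be its essential part, and let $\tilde V^i$ be the extension of $V^i$ as a local system across the components with $\lambda_j = 1$, defined on $X - D' - B$. Two explicit identifications underpin the proof. First, $\pushforward{f}V^{i,o} = \iota_{!} \tilde V^i$, where $\iota\colon (X-B) - D' \hookrightarrow X-B$ is the open inclusion: stalks on $D' \cap (X-B)$ are monodromy invariants and vanish as soon as some $D_j$ through $x$ has $\lambda_j \ne 1$. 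Second, the local description preceding Theorem \ref{thm:partialWeightFiltration} identifies $W^B_0 \DR_X(D+B,E^i)$ with the smaller complex $\DR_X(D' + B, E^i)$, since $dz_j/z_j$ with $D_j \not\subset D'$ acts trivially on $V^i$ and is forbidden by $W^B_0$, while $dz_n/z_n$ remains allowed.

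The heart of the argument is the quasi-isomorphism $\iota_{!} \tilde V^i \simeq \mathbb{R}\pushforward{\iota}\tilde V^i$ in $D^+(X-B)$. Away from $D'$ this is automatic; on $D' \cap (X-B)$, a K\"unneth computation reduces each stalk of $R^q \pushforward{\iota}\tilde V^i$ to a tensor product of factors of the form $H^{*}(\Delta^{*}, L_j)$, where $L_j$ is a rank $1$ local system on the punctured disk with monodromy $\lambda_j \ne 1$; each such factor vanishes in all degrees, so the product does as well. This matches the zero stalks of $\iota_{!}$ and is the step where unitarity is essential; it is where I expect the main work to go.

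To conclude, $E^i$ equipped with its log connection along $D' + B$ is the canonical extension of $\tilde V^i$ on $X - (D' + B)$: the Deligne normalization forces zero residues along components of $D - D'$ (where $\lambda_j = 1$), and $\tilde V^i$ has trivial monodromy around $B$ so its canonical extension also has zero residue along $B$. Deligne's theorem then yields $\mathbb{R}\pushforward{(h \circ \iota)}\tilde V^i \simeq \DR_X(D' + B, E^i)$, and the chain
\[
  \mathbb{R}\pushforward{h}\pushforward{f}V^{i,o} = \mathbb{R}\pushforward{h}(\iota_{!}\tilde V^i) \simeq \mathbb{R}\pushforward{h}\mathbb{R}\pushforward{\iota}\tilde V^i = \mathbb{R}\pushforward{(h\circ\iota)}\tilde V^i \simeq \DR_X(D' + B, E^i) = W^B_0\DR_X(D+B, E^i)
\]
followed by summing over $i$ completes the proof.
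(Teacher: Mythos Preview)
Your argument is correct and follows the same overall architecture as the paper's: localize, reduce to a rank~$1$ summand, split $D$ into the components $D'$ with nontrivial monodromy and the rest, identify $W^B_0\DR_X(D+B,E^i)$ with $\DR_X(D'+B,E^i)$, and finish with a Deligne-type quasi-isomorphism. The only substantive difference is in how the intermediate quasi-isomorphism is established. The paper works first on $Y=X-B$, cites Timmerscheidt for the fact that $\Omega^{\bullet}_Y(\log D')\otimes E|_Y$ resolves $f_*V^o$, and then passes through the Griffiths--Deligne comparison with the $C^\infty$ de Rham complex (a flasque resolution) to compute $\mathbb{R}h_*$ explicitly. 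You instead prove the sheaf-level statement $\iota_{!}\tilde V^i \cong \mathbb{R}\iota_*\tilde V^i$ directly by a K\"unneth stalk computation on punctured polydisks, and then invoke Deligne's log comparison once on $X$. Your route is a bit more self-contained (no appeal to the Esnault--Viehweg appendix) and avoids the detour through smooth forms; the paper's route has the virtue of making acyclicity for $h_*$ visible by exhibiting a soft resolution. In both cases the mathematical heart is the same vanishing $H^*(\Delta^*,L)=0$ for a rank~$1$ local system $L$ with nontrivial monodromy.
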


\begin{proof}
The statement is local, so we can assume $X$ is a polydisk. 
For the basic case, one can assume 
$V$ is of rank 1, $D$ has two components $D^1$ and $D^2$
such that the monodromy of $V$ around $D^1$ is trivial, and the monodromy of $V$
around $D^2$ is nontrivial. 

Let $Y = X - B$. Then, $\komplex \Omega_Y(\log D^2)\otimes \pullback h E$ is 
a resolution of $\pushforward{f}V^o$(see \cite{timmerscheidt87}). 

Let $g: Y - D^2 \rightarrow Y$ be the inclusion map. 
By a theorem of Griffith\cite{Griffith69} and Deligne\cite{Deligne71}, 
the inclusion map
\[
  i: \komplex \Omega_Y(\log D^2) 
  \rightarrow \pushforward g\komplex  {\mathscr A}_{Y-D^2}
\]
is a quasi-isomorphism. Therefore, $\pushforward f V$ is quasi-isomorphic to
\[
  \pushforward g \komplex {\mathscr{A}}_{Y-D^2}
\]

As $\pushforward g \komplex {\mathscr{A}}_{Y-D^2}$ 
is a complex of flasque sheaves,
$\mathbb{R}\pushforward h \pushforward f V$ is quasi-isomorphic to 
\[
  \pushforward h \pushforward g \komplex {\mathscr{A}}_{Y-D^2}
\]

Now, 
\[
  W^B_0 \komplex \Omega_X(\log D + B)\otimes E 
  = \komplex \Omega_X(\log D^2 + B) \otimes E
\]

But as we have seen the complex $\komplex \Omega_X(\log D^2 + B)$ 
is quasi-isomorphic to
\[
  \pushforward{(h\circ g)} \komplex {\mathscr{A}}_{Y-D^2}
\]
So the result for the basic case follows.

Now, let $V$ be of rank $r$.
For each $i=1,2$, let $\Gamma_i$ be the monodromy of $V$ around $D^i$.
As $V$ is unitary, we can simultaneously diagonalize all $\Gamma_1$ and $\Gamma_2$.
Therefore, we can assume $V$ is the direct sum of two rank 1 unitary local systems.
As $\derivedpushforward{h}$ and $\pushforward{f}$ commutes with direct sum. 
The result follows.

Now, let $V$ be of rank 1 and let $D^1,\cdots,D^s$ be components of $D$. 
Now let $D_1$ be the subdivisor of $D$ over which $V$ has identity monodromy;
and let $D_2$ be the subdivisor of $D$ over which $V$ has nontrivial monodromy.
Then, the result follows after the same steps in the basic case. 
\end{proof}

\begin{proposition}
The inclusion map 
\[
  i: (W^B_0\DR_X(D+B,E), \tau_.) \rightarrow (W^B_0\DR_X(D+B,E),W)
\]
is a quasi-isomorphism of filtered complexes.
\end{proposition}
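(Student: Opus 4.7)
The plan is to verify the filtered quasi-isomorphism by showing that for each index $m$ the induced map
\[
  \Gr^\tau_m K \longrightarrow \Gr^W_m K
\]
is a quasi-isomorphism of sheaf complexes, where $K := W^B_0\DR_X(D+B,E)$. Because $W$ has only two nontrivial graded pieces, namely $\Gr^W_0 K = W_0\DR_X(D+B,E)$ and $\Gr^W_1 K = K/W_0\DR_X(D+B,E)$, while $\Gr^\tau_m K \simeq \mathcal{H}^m(K)[-m]$, the claim reduces to three assertions: (I) $\mathcal{H}^m(K)=0$ for $m\ge 2$; (II) $W_0\DR_X(D+B,E)$ is a resolution of $\mathcal{H}^0(K)$ concentrated in degree $0$; (III) $K/W_0$ is quasi-isomorphic to $\mathcal{H}^1(K)[-1]$. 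Since $(K/W_0)^0=0$ (a $0$-form carries no log factor, so lies automatically in $W_0$), (III) is formal from (I) and (II) by chasing the long exact cohomology sequence of $0 \to W_0 K \to K \to K/W_0 K \to 0$.

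For (I), I would invoke the preceding proposition to rewrite $K \simeq \mathbb{R}\pushforward{h}(\pushforward{f}V^o)$. Since $h: X-B \to X$ is the inclusion of the complement of a smooth divisor, the higher direct images $R^i\pushforward{h}$ of any abelian sheaf on $X-B$ vanish for $i\ge 2$: the stalk at $p\in B$ is computed by cohomology on the intersection of $X-B$ with a small polydisk around $p$, which is homotopy equivalent to $S^1$ and thus has cohomological dimension $1$. This gives $\mathcal{H}^i(K)=0$ for $i\ge 2$.

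For (II) I would argue locally on a polydisk, where by (the proof of) Theorem \ref{thm:localDes} one may assume $V=\bigoplus_j V^j$ decomposes as a direct sum of rank-$1$ unitary local systems. Writing $D_2^{(j)}\subset D$ for the union of those components of $D$ around which $V^j$ has nontrivial monodromy, the local description of the weight filtration shows
\[
  W_0\DR_X(D+B,E) \;=\; \bigoplus_j \Omega^\bullet_X(\log D_2^{(j)})\otimes E^j,
\]
since neither $B$ nor any trivial-monodromy component of $D$ may supply a log pole. For each summand, the canonical extension $E^j$ has residue eigenvalues in the open interval $(0,1)$ along every component of $D_2^{(j)}$, so Timmerscheidt's comparison (the same result invoked in the preceding proposition, see \cite{timmerscheidt87}) identifies $\Omega^\bullet_X(\log D_2^{(j)})\otimes E^j$ with the pushforward $\pushforward{a_j}V^j$, where $a_j: X - D_2^{(j)} \to X$ is the open inclusion. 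Each summand is therefore acyclic in positive degrees, and summing matches $\mathcal{H}^0(W_0 K)$ with $\pushforward{h}\pushforward{f}V^o = \mathcal{H}^0(K)$.

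The main obstacle is (II): the acyclicity of $W_0 K$ in positive degrees is where the unitarity of $V$ enters essentially, through the open-interval location of the residue eigenvalues of the canonical extension along $D_2^{(j)}$. Once (I) and (II) are in hand, (III) is a short diagram chase on the long exact cohomology sequence, completing the proof.
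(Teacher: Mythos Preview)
Your overall architecture matches the paper's: reduce to checking that $\Gr^\tau_m K \to \Gr^W_m K$ is a quasi-isomorphism for each $m$, handle $m=0$ via Timmerscheidt's result that $W_0\DR_X(D+B,E)$ resolves $(h\circ f)_*V^o$, and then deal with $m=1$ and $m\ge 2$. Your deduction of (III) from (I) and (II) via the long exact sequence, using $(K/W_0)^0=0$, is cleaner than the explicit commutative-diagram check the paper writes out for $H^1(i)$; both arguments are really the same long exact sequence read two ways.

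The genuine divergence is in (I). The paper does \emph{not} argue topologically: it uses the residue short exact sequence
\[
  0 \longrightarrow W_0\DR_X(D+B,E) \longrightarrow W^B_0\DR_X(D+B,E) \xrightarrow{\;res\;} W_0\DR_B(B\cap D,E_B)[-1] \longrightarrow 0,
\]
and then applies Timmerscheidt's acyclicity (your (II)) \emph{twice}---once on $X$ and once on $B$---to force $\mathcal{H}^m(K)=0$ for $m\ge 2$. This keeps everything on the de Rham side and costs nothing extra once (II) is in hand.

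Your topological route to (I) via $K\simeq \mathbb{R}h_*(f_*V^o)$ is legitimate, but the justification you give is not quite enough. The statement ``$U\cap(X-B)$ is homotopy equivalent to $S^1$, hence has cohomological dimension $1$'' only bounds cohomology of \emph{locally constant} sheaves; the sheaf $f_*V^o$ is merely constructible near $B\cap D$, and a constructible sheaf on $\Delta^{n-1}\times\Delta^*$ can certainly have higher cohomology. What rescues the argument is the transversality $B\pitchfork D$ together with the fact that $V$ extends across $B$: in a polydisk with $B=\{z_n=0\}$ one has $f_*V^o \cong \mathcal{G}\boxtimes \mathbb{C}_{\Delta^*}$ (a genuine external product, since neither $D$ nor the monodromy of $V$ involves $z_n$), and then K\"unneth gives $H^i(\Delta^{n-1}\times\Delta^*, f_*V^o)=\bigoplus_{a+b=i} H^a(\Delta^{n-1},\mathcal{G})\otimes H^b(S^1,\mathbb{C})$, which vanishes for $i\ge 2$ once you check $H^a(\Delta^{n-1},\mathcal{G})=0$ for $a\ge 1$ (reduce to rank one and factor $\mathcal{G}$ as a box product over the $\Delta$-factors). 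So your (I) is salvageable, but it needs this extra paragraph; the paper's residue-sequence argument sidesteps the issue entirely.
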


\begin{proof}
This is again a local statement, so we can assume $X$ is a polydisk
and $V$ is of rank 1.
We need to show that the induced maps of $i$
\[
  H^k(i): H^k(\Gr^{\tau}_m \bcomplex) \rightarrow H^k(\Gr^{W}_m \bcomplex)
\]
are isomorphisms.

\begin{align*}
  H^k(\Gr^{\tau}_m W^B_0\DR_X(D+B,E)) = 
\begin{cases}
  H^m(W^B_0\DR_X(D+B,E)) & \text{ if } i = m \\
  0 & \text{ otherwise}
\end{cases}
\end{align*}

\textbf{Claim 1} If $m > 1$, then $H^m(\bcomplex) = 0$.

\textbf{Proof of Claim 1} We have a short exact sequence of complexes
\[
  0 \rightarrow W_0\DR_X(D+B,E) \rightarrow W^B_0 \DR_X(D+B,E)
    \xrightarrow{res} W_0\DR_B(B\cap D, E_B)[-1] \rightarrow 0
\]
where the $\DR(D\cap B, E_B, \nabla_B)$ is the complex
\[
  \cdots \rightarrow \Omega^m_B(\log B\cap D)\otimes E_B
  \xrightarrow{\nabla_B} \Omega^{m+1}_B(\log B \cap D)\otimes E_B 
  \rightarrow \cdots
\]
and the map $res$ is the residue map.

Taking cohomology, we get

\begin{align*}
  \cdots 
  &\rightarrow H^k(W_0\DR_X(D+B,E)) 
    \rightarrow H^k(W^B_0\DR_X(D+B,E)) \\
  & \rightarrow H^{k-1}(W_0\DR_B(B\cap D, E_B)) \rightarrow \cdots
\end{align*}

Timmerscheidt proved in the Appendix D of \cite{esnault-viehweg} that 
$W_0\DR_X(D+B,E)$ is a resolution of $(h\circ f)_*V$. Therefore,
$\bcomplex$ is exact. Likewise, 
\[
  W_0\DR_B(B\cap D, E_B)
\]
is also exact. 

So the conlusion follows.

\textbf{End of Proof}

The above proof also shows that 

\begin{gather*}
  H^k(\Gr^W_1 \bcomplex) = 
\begin{cases}
  H^1(\Gr^W_1 \bcomplex)  & \text{ if } k = 1\\
  0 & \text{ if } k > 1
\end{cases}
\end{gather*}

\begin{gather*}
  H^k(\Gr^W_0 \bcomplex) = 
\begin{cases}
  H^0(W_0\DR(D+B,E))  & \text{ if } k = 0 \\
  0 & \text{ if } k > 0 
\end{cases}
\end{gather*} 

Therefore, to prove 
\[
  i : (W^B_0\DR_X(D+B,E),\tau) 
  \rightarrow (W^B_0\DR_X(D+B,E), W)
\]
is a quasi-isomorphism of filtered complexes, it remains to prove that
both
\[
  H^0(i) : H^0(\Gr^{\tau}_0\bcomplex)
  \rightarrow H^0(\Gr^W_0 \bcomplex)
\]
and 
\[
  H^1(i) : H^1(\Gr^{\tau}_1\bcomplex)
  \rightarrow H^1(\Gr^W_1 \bcomplex)
\]
are isomorphisms.

Now,
\[
  H^0(\Gr^{\tau}_0\bcomplex) = 
  \ker (E \xrightarrow{\nabla} W^B_0(\Omega^1_X(\log D + B)\otimes E)
\]
and 
\[
  H^0(\Gr^W_0 \bcomplex) = 
  \ker (E \xrightarrow{\nabla} W_0(\Omega^1_X(\log D + B)\otimes E)
\]
It is clear that the map $H^0(i)$ is an isomorphism.

To simplify notations, write $\komplex K$ for $\bcomplex$, 
from the proof of Claim 1, we have a commutative diagram 

\[
\begin{tikzcd}
  & H^1(\Gr^{\tau}_1 \komplex K) \arrow{r}{H^1(i)} \arrow{d}
  & H^1(\Gr^W_1 \komplex K) \arrow{r}{res} 
  & H^1(W_0 \DR_B(B\cap D, E_B)[-1]) \arrow{d} \\
  & H^1(\komplex K) \arrow{rr}{res}
  && H^1(W_0 \DR_B(B\cap D, E_B)[-1])
\end{tikzcd}
\] 
and the residue map on the second row is an isomorphism. 
As the residue map on the first row is an isomorphism (even on the 
complex level), we see that the map $H^1(i)$ is an isomorphism.

\end{proof}

For reader's sake, we restate the main theorem of this Section:
\begin{theorem}
\[
  \cmhc
\]
is a cohomological mixed $\real$-Hodge complex
\end{theorem}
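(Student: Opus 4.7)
The plan is to verify the defining axioms of a cohomological mixed Hodge complex. The first three items of the definition---namely the real complex $K_{\real} = \mathbb{R}h_* f_* V^o_{\real}$, the filtered complex $(K_{\real}, \tau)$, the bifiltered complex $(W^B_0\DR_X(D+B,E), F, W)$, and the comparison isomorphisms between them---are handled by the two propositions immediately preceding the theorem. The first gives the quasi-isomorphism $K_{\real}\otimes\complex \simeq W^B_0\DR_X(D+B,E)$, while the second upgrades it to a filtered quasi-isomorphism identifying $\tau$ on the real side with $W$ on the complex side. Thus it suffices to verify the graded-piece axiom: for every $n$, the triple consisting of $\Gr^{\tau}_n K_{\real}$, $\Gr^W_n$ of the bifiltered complex, and the induced isomorphism, is an $\real$-cohomological Hodge complex of weight $n$.

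By construction $W_m$ is nonzero only for $m \geq 0$ and is constant for $m \geq 1$, so only the graded pieces $\Gr^W_0$ and $\Gr^W_1$ are nonzero. For $\Gr^W_0 = W_0\DR_X(D+B,E)$, I would invoke Timmerscheidt's theorem applied to $E$ on $X$ with the normal crossing divisor $D + B$ ($B$ is transversal to $D$ by construction): by definition, the weight-zero part of the resulting MHC on $\DR_X(D+B, E)$ is a CHC of weight $0$, which is precisely our $\Gr^W_0$. For $\Gr^W_1 = W^B_0/W_0$, the short exact sequence
\[
  0 \rightarrow W_0\DR_X(D+B,E) \rightarrow W^B_0\DR_X(D+B,E) \xrightarrow{\mathrm{res}_B} i_* W_0\DR_B(B\cap D, E_B)[-1] \rightarrow 0,
\]
used already in the previous proof, identifies $\Gr^W_1$ with $i_* W_0\DR_B(B\cap D, E_B)[-1](-1)$, where $i: B \hookrightarrow X$ and $(-1)$ denotes the Tate twist. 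By Lemma~\ref{lemma:2}, $E_B = E|_B$ is the canonical extension of $V|_{B - B\cap D}$, so Timmerscheidt's theorem applies on $B$: $W_0\DR_B(B\cap D, E_B)$ is a CHC of weight $0$ on $B$. Closed-embedding pushforward by $i$ preserves the CHC property, and the shift $[-1]$ together with the Tate twist $(-1)$ raises the weight from $0$ to $0 - 1 + 2 = 1$, as required.

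The main obstacle is the precise bookkeeping of the Tate twist on $\Gr^W_1$ and its compatibility with the real lattice. The residue map on the de Rham side carries no intrinsic factor of $2\pi i$, whereas the identification of $R^1 h_* \real_{X-B}$ with $\real(-1)_B$ along $B$ uses the local generator $\frac{1}{2\pi i}\frac{dz}{z}$, where $z$ is a local equation of $B$. Verifying that the filtered comparison isomorphism $\alpha$ descends on $\Gr^W_1$ to an isomorphism of $\real$-CHCs with the correct Tate-twisted real structure---so that each $H^k(\Gr^W_1)$ carries a Hodge structure of weight $1+k$---is the substantive content and follows the pattern of Deligne's Hodge~II, Section~3, adapted via the unitary structure and Timmerscheidt's analysis in place of the constant coefficient case.
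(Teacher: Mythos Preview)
Your proposal is correct and follows essentially the same approach as the paper: reduce to checking that $\Gr^W_0$ and $\Gr^W_1$ are cohomological Hodge complexes of weights $0$ and $1$, invoke Timmerscheidt for $\Gr^W_0 = W_0\DR_X(D+B,E)$, and for $\Gr^W_1$ use the residue identification with $W_0\DR_B(B\cap D, E_B)[-1]$ together with Timmerscheidt on $B$. The only cosmetic difference is that where you package the weight shift via the Tate twist $(-1)$ and the formula $0-1+2=1$, the paper instead verifies directly that $F$ and $\bar F$ are $(k+1)$-opposed on $\hypercohomology^k(\Gr^W_1)$ by tracking $\Gr^F_p \mapsto \Gr^{F_B}_{p-1}$ under the residue; these are two ways of saying the same thing.
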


\begin{proof}
The quasi-isomorphism 
\[
  \canonicalfiltration \otimes \complex 
  \rightarrow 
  (\bcomplex, W_.)
\]
was proved in the previous proposition.

It remains to show
\[
  (\Gr^{\tau}_m \hodgelattice,
  (\Gr^{W}_m\bcomplex, F))
\]
is a cohomological $\real$-complex of weight $m$, i.e. the Hodge 
spectral sequence of $(\Gr^{W}_m\bcomplex, F)$ degenerates at $E_1$, and the 
induced filtration on
\[
  \hypercohomology^k(X, \Gr^W_m\bcomplex) = 
  \hypercohomology^k(X, \Gr^{\tau}_m\hodgelattice)\otimes \complex
\]
defines a pure $\real$-Hodge structure of weight $k+m$ on 
\[
  \hypercohomology^k(X, \Gr^{\tau}_m \hodgelattice)
\]
i.e. the induced filtration $F$ on $\hypercohomology^k(X, \Gr^{W}_m\bcomplex)$
is $m+k$ opposed to its conjugate.

For $m > 1$, all $\Gr^W_m\bcomplex$ are 0, so we only need to show the case for
$m =0, 1$.

For $m = 0$, 
\[
  (\Gr^W_m\bcomplex, F) = (W_0\DR_X(D+B,E), F)
\]
Timmerscheidt showed that it is a cohomological $\real$-complex
of weight $0$ in \cite{esnault-viehweg}(Appendix D).

For $m = 1$, we have seen that
\[
  \Gr^W_1\bcomplex \cong W_0\DR(B\cap D, E_B, \nabla_B)[-1]
\]
Let $F$ be the induced Hodge filtration on $\Gr^W_1\bcomplex$, and let
$F_B$ be the usual Hodge filtration on $W_0\DR(B\cap D, E_B, \nabla_B)$. 
let $\bar F$ and $\bar F_B$ be their conjugates. 

\renewcommand{\c}{\hypercohomology^k(X, \Gr^W_1\bcomplex)}
To show $F$ and $\bar F$ are $k+1$ opposed on 
$\hypercohomology^k(X, \Gr^W_1\bcomplex)$, we show that
\[
  \Gr^{\bar F}_q\Gr^F_p\c = 0 \text{ if } p + q \neq k + 1
\]
As $\Gr^W_1\bcomplex \cong W_0\DR_B(B\cap D, E_B)[-1]$, 
\begin{align*}
  \Gr^F_p \c & = \Gr^{F_B}_{p-1}
                 \hypercohomology^{k-1}(B, W_0\DR_B(B\cap D, E_B)) \\
  \Gr^{\bar F}_q \c & = \Gr^{\bar F}_{q-1}
                 \hypercohomology^{k-1}(B, W_0\DR_B(B\cap D, E_B)) \\
\end{align*}
Therefore, $\Gr^{\bar F}_q\Gr^F_p\c = 0$ if $p-1 + q - 1 \neq k - 1$.

The $E_1$-degeneration of $(\Gr^W_1\bcomplex, F)$ follows from the $E_1$-degneration
of $(W_0\DR(B\cap D, E_B, \nabla_B), F_B)$.

\end{proof}

So far, we have shown that if $V$ has a real lattice $V_{\real}$, i.e.
\[
  V = V_{\real}\otimes_{\real}\complex
\]
Then, the Hodge spectral sequence 
\[
  E_1^{p,q} = H^q(X, W^B_0\Omega^p_X(\log D + B)\otimes E)
  => \hypercohomology^{p+q}(X, \bcomplex)
\]
degenerates at $E_1$. 

Now, consider the case when $V$ does not have a real-lattice.

\section{Vanishing Theorem for the complex $\Gr^W_.\DR_X(D,E)$}
Now, let $V$ be any unitary local system over $\complex$. We have 
seen in Section \ref{sect:vanishingThm} that even if 
$V$ does not have a real lattice, the spectral sequence of 
$(\DR_X(D, E), F)$ still have $E_1$-degeneration. Similarly, we have

\begin{lemma}
\label{lemma:spectralSequenceDegWB0}
Let $B$ be a smooth divisor transversal to $D$, then
The spectral sequence of $(W^B_0\DR(D+B, E), F)$:
\[
  E^{p,q}_1 = H^q(X, W^B_0(\Omega_X^p(\log D + B)))
  => \hypercohomology^{p+q}(X, W^B_0(\DR_X(D+B,E)))
\]
degenerates at $E_1$
\end{lemma}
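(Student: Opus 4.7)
The plan is to mimic the trick used in Section \ref{sect:vanishingThm} to remove the real-lattice hypothesis from Theorem \ref{thm:spectralSequenceDeg}. Concretely, replace $V$ by $V \oplus \bar V$: we constructed earlier a real unitary local system $W_\real$ of rank $2r$ with $W_\real \otimes_\real \complex \cong V \oplus \bar V$. Let $E$, $\bar E$ be the canonical extensions of $V$, $\bar V$ respectively; then $E \oplus \bar E$ is the canonical extension of $V \oplus \bar V$, and the real lattice $W_\real$ places us in the hypothesis of Theorem \ref{thm:MHSPartialWeightFiltration} for the local system $V \oplus \bar V$.

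The next step is to note that every piece of the construction $W^B_0\DR_X(D+B,-)$ is additive in the coefficient local system. Indeed, the Hodge filtration $F$, the subsheaves $W^B_0 \Omega^p_X(\log D+B)\otimes(-)$, and the connection $\nabla$ all decompose as direct sums under $E \oplus \bar E$. Hence
\[
  W^B_0\DR_X(D+B, E \oplus \bar E) \;\cong\; W^B_0\DR_X(D+B, E) \;\oplus\; W^B_0\DR_X(D+B, \bar E),
\]
as filtered complexes with respect to $F$. Taking hypercohomology and the Hodge spectral sequence, which both commute with finite direct sums, the spectral sequence on the left is the direct sum of the two spectral sequences on the right.

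From Theorem \ref{thm:MHSPartialWeightFiltration} applied to $W_\real$, the triple associated to $V \oplus \bar V$ is an $\real$-cohomological mixed Hodge complex, so by the general theory of Deligne \cite{Deligne74} the Hodge spectral sequence of $(W^B_0\DR_X(D+B, E\oplus\bar E), F)$ degenerates at $E_1$. Since a direct sum of spectral sequences degenerates at $E_1$ if and only if each summand does, the Hodge spectral sequence of $(W^B_0\DR_X(D+B, E), F)$ degenerates at $E_1$ as well.

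The only step that needs care is the compatibility in the second paragraph: one must confirm that the local description of $W^B_0$ in terms of generators $\omega \otimes \mu$ commutes with the decomposition $E \oplus \bar E$, but this follows immediately because the residue/identity-action condition defining $W^B_0$ is pointwise in the rank-one summands of $V$ (compare the local diagonalization argument in Theorem \ref{thm:localDes}(1) and the local description preceding Proposition \ref{prop:gradedComplex}). Once this additivity is observed, the proof is essentially the same one-line argument as in the corollary following Theorem \ref{thm:spectralSequenceDeg}.
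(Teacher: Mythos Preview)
Your proposal is correct and follows exactly the approach the paper intends: the lemma is stated with only the word ``Similarly'' pointing back to Section~\ref{sect:vanishingThm}, and your argument---passing to $V\oplus\bar V$ via the real form $W_{\real}$, invoking Theorem~\ref{thm:MHSPartialWeightFiltration} for the real-lattice case, and using additivity of $W^B_0\DR_X(D+B,-)$ together with the fact that a direct sum of spectral sequences degenerates at $E_1$ iff each summand does---is precisely that. The extra care you take in checking that the local description of $W^B_0$ respects the direct-sum decomposition is a welcome detail the paper leaves implicit.
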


\begin{theorem}
\label{thm:gradedUnitaryVanishing}
Let $B$ be a smooth very ample divisor transveral to $D$, then for 
$m = 0, \cdots, n-1$
\[
  H^q(X, \Gr^W_m\DR^p(D,E,\nabla)\otimes O_X(B)) = 0
\]
for $p + q > n+1$.
\end{theorem}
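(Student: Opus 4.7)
The plan is to reduce the general $m$ case to $m = 0$ via the residue isomorphism of Proposition \ref{prop:gradedComplex}, then prove the $m = 0$ case using the partial weight filtration exact sequence of Theorem \ref{thm:partialWeightFiltration} together with induction on $\dim X$.

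By Proposition \ref{prop:gradedComplex}, the residue map gives an isomorphism
\[
  \Gr^W_m \DR^p_X(D,E) \cong \pushforward{v_m}\bigl(W_0(\Omega^{p-m}_{\tilde D_m}(\log \tilde C_m) \otimes E_m)\bigr).
\]
Since $v_m$ is a finite morphism, $\pushforward{v_m}$ is exact and the projection formula yields
\[
  H^q\bigl(X, \Gr^W_m \DR^p_X(D,E) \otimes O_X(B)\bigr)
  \cong H^q\bigl(\tilde D_m, W_0(\Omega^{p-m}_{\tilde D_m}(\log \tilde C_m) \otimes E_m) \otimes v_m^* O_X(B)\bigr).
\]
Because $B$ is very ample and transversal to $D$, on each component of $\tilde D_m$ (smooth projective of dimension $n - m$) the restriction of $v_m^* B$ is a smooth very ample divisor transversal to $\tilde C_m$; and by Theorem \ref{thm:localDes}, $V_m$ is a unitary local system with canonical extension $E_m$. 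Writing $p' = p - m$, the condition $p + q > n + 1$ becomes $p' + q > \dim \tilde D_m + 1$, which is exactly the $m = 0$ case of the theorem on $\tilde D_m$. Hence I may assume $m = 0$.

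For $m = 0$, applying Theorem \ref{thm:partialWeightFiltration} with $m = 0$ yields the short exact sequence
\begin{align*}
  0 \to W^B_0\Omega^p_X(\log D + B) \otimes E &\to W_0\Omega^p_X(\log D) \otimes E \otimes O_X(B) \\
  &\to W_0\Omega^p_B(\log D \cap B) \otimes E_B \otimes O_X(B)|_B \to 0,
\end{align*}
and the long exact sequence in cohomology reduces the desired vanishing of the middle term to that of the two outer terms. For the left term, Lemma \ref{lemma:spectralSequenceDegWB0} gives $E_1$-degeneration of the Hodge spectral sequence of $(W^B_0\DR_X(D+B, E), F)$, and the quasi-isomorphism $W^B_0\DR_X(D+B,E) \simeq \derivedpushforward h \pushforward f V^o$ from the previous section yields
\[
  \hypercohomology^k(X, W^B_0\DR_X(D+B,E)) \cong H^k(X - B, \pushforward f V^o).
\]
Since $B$ is very ample, $X - B$ is affine of dimension $n$, so Artin vanishing (Theorem \ref{thm:artinVanishing}) forces this to vanish for $k > n$; by $E_1$-degeneration, $H^q(X, W^B_0\Omega^p_X(\log D+B) \otimes E) = 0$ for $p + q > n$, a fortiori for $p + q > n + 1$.

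For the right term I induct on $n = \dim X$, with vacuous base case $n = 1$. By Lemma \ref{lemma:2}, $E_B$ is the canonical extension of $V|_{B - B \cap D}$. The key observation is that $O_X(B)|_B$ is itself very ample on $B$: using $|O_X(B)|$ to embed $X \hookrightarrow \mathbb{P}(H^0(X, O_X(B))^*)$, the divisor $B$ is cut out by a hyperplane, so $O_X(B)|_B$ is the restriction of $O(1)$ under the induced embedding of $B$. By Bertini on $B$, choose a smooth $B' \in |O_X(B)|_B|$ transversal to $D \cap B$; then $O_B(B') = O_X(B)|_B$, and the inductive hypothesis applied to $(B, D \cap B, E_B, B')$ gives the required vanishing for $p + q > \dim B + 1 = n$, hence for $p + q > n + 1$. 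The main obstacle will be the bookkeeping in the reduction: one must verify that all relevant structures (smooth SNC, transversality, canonical extension of a unitary local system, very ampleness) propagate correctly through both $v_m: \tilde D_m \to X$ and the restriction to $B$, so that Proposition \ref{prop:gradedComplex}, Theorem \ref{thm:partialWeightFiltration}, and the inductive hypothesis can all be applied with the hypotheses stated.
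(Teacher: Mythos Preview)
Your argument is correct and follows essentially the same route as the paper: the exact sequence of Theorem \ref{thm:partialWeightFiltration}, Artin vanishing via the $E_1$-degeneration of Lemma \ref{lemma:spectralSequenceDegWB0}, induction on $\dim X$ for the restriction to $B$, and the reduction of general $m$ to $m=0$ via Proposition \ref{prop:gradedComplex}. The only differences are cosmetic: the paper proves $m=0$ first and then invokes the residue isomorphism, whereas you reduce to $m=0$ at the outset; and you are more explicit than the paper about why $O_X(B)|_B$ is again very ample on $B$ (a point the paper's ``induct on dimension'' leaves implicit).
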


\begin{proof}
We show first that 
\[
  H^q(X, W_0\DR^p(D, E, \nabla)\otimes O_X(B)) = 0
\]
for $p + q > n+1$. 

By Theorem \ref{thm:partialWeightFiltration}, we have the exact sequence
\[
  0 \rightarrow W^B_0\DR_X(D+B,E) 
    \rightarrow W_0\DR_X(D,E)\otimes O_X(B) 
    \rightarrow W_0\DR_B(B\cap D, E_B)\otimes O_X(B)
    \rightarrow 0
\]

Take cohomology sequence, we get
\[
    \cdots\rightarrow H^q(X, W^B_0(\Omega^p_X(\log D+B)\otimes E)) 
    \rightarrow H^q(X, W_0(\Omega^p_X(\log D)\otimes E)\otimes O_X(B))
    \rightarrow 
\]
\[  
    \rightarrow H^q(B, W_0(\Omega^p_B(\log B\cap D)\otimes E)\otimes O_X(B))
    \rightarrow \cdots 
\]
Therefore, it is enough to show

\textbf{Claim 1:} $H^q(X, W_0^B(\Omega^p_X(\log D + B)\otimes E)) = 0$ 
for $p + q > n$.

\textbf{Claim 2:} 
$H^q(B, W_0(\Omega^p_B(\log B\cap D)\otimes E_B))\otimes O_X(B)) = 0$
for $p + q > n$.

\textbf{Proof of Claim 1:}
Consider the maps
\[
  X - (B+D)\xrightarrow{f} X - B \xrightarrow{h} X
\]
Write $V^o$ for the restriction of $V$ on $X-B$. We have seen
in Lemma \ref{lemma:spectralSequenceDegWB0} that the spectral
sequence
\begin{align*}
  E_1^{p,q} = H^q(X, W^B_0(\Omega^p_X(\log D + B)\otimes E)) 
  => & \hypercohomology^{p+q}(X, \bcomplex) \\
  = & H^{p+q}(X, \derivedpushforward{h}\pushforward f V^o) \\
  = & H^{p+q}(X-B, \pushforward f V^o) 
\end{align*}

As $X - B$ is affine, it follows from Theorem \ref{thm:artinVanishing} that
\[
  H^q(X, W^B_0(\Omega^p_X(\log D + B)\otimes E)) = 0
\]
for $p + q > n$.

\textbf{End}

\textbf{Proof of Claim 2:} Induct on dimension of $X$ \textbf{End}

Therefore, it remains to show that if $X$ is a smooth projective curve,
then
\[
  H^1(X, W_0(\Omega_X(\log D)\otimes E)\otimes O_X(B)) = 0
\]
But for the curve case, 
\[
  W_0(\Omega_X(\log D)\otimes E)\otimes O_X(B) 
  = W^B_0(\Omega_X(\log D + B)\otimes E) 
\]
Therefore the result follows again from Theorem \ref{thm:artinVanishing}

To finish the rest of the proof, we use the identification from proposition
\ref{prop:gradedComplex}
\[
  (W_m/W_{m-1})\DR_X(D,E) \cong 
  W_0\DR(\tilde C_m, E_m, \nabla_m)[-m]
\]
and then apply the above argument to $\tilde D_m$.
\end{proof}
  
\begin{corollary}
For any $m \in \integer$, 
\[
  H^q(X, W_m(\Omega_X^p(\log D)\otimes E)\otimes O_X(B)) = 0
\]
for $p + q > \dim X$
\end{corollary}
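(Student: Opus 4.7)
The proof is a routine devissage along the weight filtration, with Theorem \ref{thm:gradedUnitaryVanishing} doing all of the real work. The plan is to induct on $m$, using the short exact sequence
\[
  0 \to W_{m-1}(\Omega_X^p(\log D)\otimes E) \to W_m(\Omega_X^p(\log D)\otimes E) \to \Gr^W_m(\Omega_X^p(\log D)\otimes E) \to 0
\]
of $O_X$-modules, which is just the definition of the associated graded piece.

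For the base case, $W_m = 0$ for $m < 0$ by definition (Section \ref{sect:weightfiltration}), so the cohomology vanishes trivially. For the inductive step, I would tensor the displayed short exact sequence with $O_X(B)$; exactness is preserved since $O_X(B)$ is a line bundle hence flat. The associated long exact sequence in cohomology gives
\[
  \cdots \to H^q(X, W_{m-1}\otimes O_X(B)) \to H^q(X, W_m\otimes O_X(B)) \to H^q(X, \Gr^W_m\otimes O_X(B)) \to \cdots,
\]
and for $p+q > \dim X$ the left-hand term vanishes by the inductive hypothesis while the right-hand term vanishes by Theorem \ref{thm:gradedUnitaryVanishing}. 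Hence the middle term vanishes, as required.

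Because the weight filtration is bounded above (locally, at most $s$ log forms can act by the identity, where $s$ is the number of components of $D$ through a point), the induction terminates after finitely many steps; for $m$ large enough that $W_m$ equals the full $\Omega_X^p(\log D)\otimes E$, the statement also recovers Corollary \ref{coro:unitaryVanishing}, providing a consistency check. There is no substantive obstacle here: all of the real analysis is concentrated in Theorem \ref{thm:gradedUnitaryVanishing}, and this corollary merely propagates the vanishing up the filtration one graded piece at a time.
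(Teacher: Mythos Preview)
Your devissage along the weight filtration is exactly the argument the paper has in mind: the corollary is stated without proof immediately after Theorem \ref{thm:gradedUnitaryVanishing}, and the only way it follows is by the induction on $m$ via the short exact sequences $0 \to W_{m-1} \to W_m \to \Gr^W_m \to 0$ that you describe. The one wrinkle is the range $p+q > n+1$ appearing in the statement of Theorem \ref{thm:gradedUnitaryVanishing} versus $p+q > n$ here, but the proof of that theorem (Claims 1 and 2 and the curve base case) actually establishes the stronger range $p+q > n$, so this is a typo in the theorem statement rather than a gap in your argument.
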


\begin{corollary}
Let $L$ be an ample line bundle on $X$,  then
\[
  H^q(X, \Gr^W_.(\Omega^p_X(\log D)\otimes E)\otimes L) = 0
\]
for $p + q > n$.
\end{corollary}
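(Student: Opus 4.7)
The strategy is to reduce to the very-ample case of Theorem \ref{thm:gradedUnitaryVanishing} by a cyclic cover, in complete parallel with the reduction from very-ample to ample used to prove Corollary \ref{coro:unitaryVanishing}. Fix an integer $m \geq 1$ such that $L^{\otimes m}$ is very ample, choose a smooth divisor $B \in |L^{\otimes m}|$ transversal to $D$, and form the degree-$m$ cyclic cover $\pi: X' \to X$ branched along $B$ constructed in the Proposition preceding Corollary \ref{coro:unitaryVanishing}. Writing $D' = \pi^* D$, $E' = \pi^* E$, and $\tilde B = (\pi^* B)_{\mathrm{red}}$, that Proposition supplies everything we need: $X'$ is smooth, $D'$ is SNC, $\tilde B$ is smooth and transversal to $D'$, $\pi^* \Omega^p_X(\log D) = \Omega^p_{X'}(\log D')$, $E'$ is the canonical extension of $\pi^{-1} V$, and $O_{X'}(\tilde B) = \pi^* L$. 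For $m$ large enough, $\tilde B$ is also very ample on $X'$.

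The one new ingredient beyond that Proposition is that $\pi^*$ respects the weight filtration, inducing an isomorphism
\[
  \pi^* \Gr^W_k(\Omega^p_X(\log D) \otimes E) \;\cong\; \Gr^W_k(\Omega^p_{X'}(\log D') \otimes E').
\]
This is local: on a polydisk in which $D$ is cut out by $z_1 \cdots z_s = 0$ and $\pi$ takes the form $(z_1, \ldots, z_{n-1}, w) \mapsto (z_1, \ldots, z_{n-1}, w^m)$, the generators $dz_j / z_j$ of $\Omega^1(\log D)$ for $j \leq s$ pull back to themselves, and the monodromies of $\pi^{-1} V$ around components of $D'$ agree with those of $V$ around the corresponding components of $D$. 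The local description of $W_\bullet$ from Section \ref{sect:weightfiltration}, which filters by the number of log forms acting by the identity on each rank-$1$ summand of $V$, is then preserved on the nose.

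With this in hand, Theorem \ref{thm:gradedUnitaryVanishing} and its immediate corollary applied on $X'$ with the very ample divisor $\tilde B$ give
\[
  H^q\bigl(X', \Gr^W_k(\Omega^p_{X'}(\log D') \otimes E') \otimes O_{X'}(\tilde B)\bigr) = 0
\]
for $p + q > n$. Setting $F = \Gr^W_k(\Omega^p_X(\log D) \otimes E)$ and using $O_{X'}(\tilde B) = \pi^* L$, this reads $H^q(X', \pi^*(F \otimes L)) = 0$. Since $\pi$ is finite, the projection formula together with $\pi_* O_{X'} \cong \bigoplus_{i=0}^{m-1} L^{-i}$ (as already used in the proof of Corollary \ref{coro:unitaryVanishing}) identifies this with
\[
  \bigoplus_{i=0}^{m-1} H^q(X, F \otimes L^{\otimes(1-i)}) = 0,
\]
whose $i=0$ summand is precisely $H^q(X, F \otimes L)$, yielding the claim.

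I expect the only subtle step to be the weight-filtration compatibility of $\pi^*$: one must observe that $\pi$ ramifies only in the direction normal to $D$ (since $B$ is transversal to $D$), so that the local coordinates defining components of $D$ pull back to themselves and the associated monodromies are transported intact. Arranging $\tilde B$ to be very ample on $X'$ is a secondary point, handled by enlarging $m$ if necessary.
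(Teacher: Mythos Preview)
Your approach is essentially the paper's: reduce to the very-ample case via a cyclic cover, after checking that $\pi^*$ respects the weight filtration. The paper phrases the last step as $\pi^* W_0 = W_0$ (having first reduced $\Gr^W_m$ to a $W_0$ on $\tilde D_m$ via Proposition~\ref{prop:gradedComplex}), but your direct verification for $\Gr^W_k$ from the local description is equivalent.

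One point does need correction. Your claim that enlarging $m$ makes $\tilde B$ very ample on $X'$ is not right: $O_{X'}(\tilde B)=\pi^*L$, and since both $X'$ and $\pi^*L$ change with $m$ there is no reason for $\pi^*L$ ever to become very ample (for instance, take $X$ an elliptic curve and $L$ of degree $1$; a Riemann--Hurwitz count shows $\deg\pi^*L=m$ while $2g(X')+1$ grows quadratically in $m$). What is true, and what suffices, is that $X'-\tilde B=\pi^{-1}(X-B)$ is affine, being finite over the affine variety $X-B$. Inspecting the proof of Theorem~\ref{thm:gradedUnitaryVanishing}, the very-ampleness of $B$ is used only to guarantee that $X-B$ is affine so that Artin vanishing applies; hence the argument on $X'$ goes through with $\tilde B$ in place of a genuinely very ample divisor. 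With this substitution your proof is complete. The paper is equally terse at this step, invoking ``the same idea from Corollary~\ref{coro:unitaryVanishing}'' without further comment.
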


\begin{proof}
Like in Theorem \ref{thm:gradedUnitaryVanishing}, it is enough to show
\[
  H^q(X, W_0\DR^p(D,E,\nabla)\otimes L) =0
\]
for $p + q > n$.  

Let $m$ be a large enough integer such that $\sheaftensor{L}{m}$ 
is very ample. Let $B$ be a smooth hyperplane divisor transversal to
$D$ so that 
\[
  L \cong O_X(B)
\]

Use the same idea from Corollary \ref{coro:unitaryVanishing}, 
we construct a cyclic cover of degree $m$ branched over $B$
\[
  \pi: X^{\prime} \rightarrow X
\]

To finish the proof, it remains to show
\[
  \pullback \pi W_0\DR_X(D,E) = W_0\DR(\tilde D, \tilde E, \tilde \nabla)
\]
But this is clear from the local description of $W_0$.
\end{proof}

\section{Appendix}
\subsection{Linear algebra}
\begin{theorem}
Let $U$ be an unitary matrix over $\complex$, then $U$ is diagonalizable.
\end{theorem}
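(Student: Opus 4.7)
The plan is to prove this by induction on the dimension, using the two standard facts that (i) over $\complex$ every linear operator has an eigenvector (by the fundamental theorem of algebra applied to the characteristic polynomial), and (ii) the orthogonal complement of a $U$-invariant subspace with respect to the standard Hermitian inner product is again $U$-invariant, because $U$ is unitary.

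First I would set up the induction. Let $U$ act on $V = \complex^n$ equipped with the standard Hermitian form $\langle \cdot,\cdot\rangle$, so $U^*U = I$. In the base case $n=1$ the statement is trivial. For the inductive step, I would pick an eigenvalue $\lambda$ of $U$ and a unit eigenvector $v_1$ with $Uv_1 = \lambda v_1$. Set $W = (\complex v_1)^{\perp}$, so $V = \complex v_1 \oplus W$ orthogonally.

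Next I would verify that $W$ is $U$-invariant. For $w \in W$, compute
\[
  \langle Uw, v_1\rangle = \langle Uw, \lambda^{-1} U v_1\rangle
  = \overline{\lambda^{-1}}\langle Uw, Uv_1\rangle
  = \overline{\lambda^{-1}}\langle w, v_1\rangle = 0,
\]
where the middle equality uses $\langle Ux, Uy\rangle = \langle x, y\rangle$, valid because $U^*U = I$, and the fact that $|\lambda|=1$ so $\lambda$ is nonzero. Thus $Uw \in W$, and the restriction $U|_W$ is again unitary on the $(n-1)$-dimensional Hermitian space $W$. By the inductive hypothesis $U|_W$ is diagonalizable, giving an orthonormal eigenbasis $v_2,\ldots,v_n$ of $W$. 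Together with $v_1$, these provide an orthonormal eigenbasis of $V$, so $U$ is diagonalizable (in fact by a unitary change of basis).

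There is no real obstacle here; the only subtlety is remembering to invoke that eigenvalues of a unitary operator are nonzero (indeed have modulus one), so the step $v_1 = \lambda^{-1}Uv_1$ is legitimate. If one prefers, the same argument can be phrased via Schur's theorem: any $U$ is unitarily similar to an upper triangular $T$, and if $U$ is unitary then $T$ is unitary, but a unitary upper triangular matrix is forced to be diagonal by comparing column norms, which yields the conclusion immediately. I would choose the inductive argument above because it is self-contained and matches the level of the rest of the appendix.
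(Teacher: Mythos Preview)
Your argument is correct and complete: the induction on dimension via the $U$-invariance of the orthogonal complement of an eigenvector is the standard proof of the spectral theorem for unitary operators, and your computation verifying invariance is clean. The alternative Schur-triangularization argument you sketch is also valid.

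There is nothing to compare against here, however: the paper states this theorem without proof, treating it as a known linear-algebra fact and moving directly to the next theorem on simultaneous diagonalization of commuting diagonalizable matrices (which is the result the appendix actually proves). Your write-up would serve perfectly well as the missing proof, and its level is consistent with the rest of the appendix.
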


\begin{theorem}
Let $A$ and $B$ be commuting diagonalizable $n\times n$ matrices over any field $k$, 
then $A$ and $B$ can 
be simultaneously diagolized.
\end{theorem}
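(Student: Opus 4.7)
The plan is to use the standard simultaneous eigenspace decomposition argument. First, since $A$ is diagonalizable over $k$, I would decompose $k^n = \bigoplus_{\lambda} V_\lambda$ as the direct sum of eigenspaces of $A$, indexed by the distinct eigenvalues $\lambda \in k$ of $A$.

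Next, I would observe that $B$ preserves each $V_\lambda$: if $v \in V_\lambda$, then $A(Bv) = BAv = \lambda (Bv)$, so $Bv \in V_\lambda$. Thus $B$ restricts to an endomorphism $B|_{V_\lambda}$ of each $V_\lambda$, and it suffices to diagonalize each such restriction, since the union of bases of the $V_\lambda$'s is automatically a basis of $k^n$ consisting of eigenvectors of $A$.

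The main step, and essentially the only nontrivial point, is to verify that each restriction $B|_{V_\lambda}$ is itself diagonalizable. I would argue this via the minimal polynomial criterion: an endomorphism is diagonalizable over $k$ if and only if its minimal polynomial splits in $k[x]$ into distinct linear factors. Since $B$ is diagonalizable, $m_B(x) = \prod_i (x - \mu_i)$ with the $\mu_i \in k$ pairwise distinct. The restriction $B|_{V_\lambda}$ is annihilated by $m_B$, so its minimal polynomial divides $m_B$, hence also splits into distinct linear factors in $k[x]$, giving diagonalizability of $B|_{V_\lambda}$. Choosing a basis of $V_\lambda$ consisting of eigenvectors of $B|_{V_\lambda}$, each such vector is simultaneously an eigenvector of $A$ with eigenvalue $\lambda$.

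Assembling these bases across all $\lambda$ yields a basis of $k^n$ of common eigenvectors, and the corresponding change-of-basis matrix $P$ makes $P^{-1}AP$ and $P^{-1}BP$ simultaneously diagonal. The main obstacle is the restriction lemma above; once one has the minimal-polynomial characterization of diagonalizability in hand (which holds over any field $k$, not only algebraically closed ones), the rest of the argument is a routine assembly.
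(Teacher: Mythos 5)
Your proof is correct, but it takes a different route from the paper's. You decompose $k^n$ at once into the eigenspaces $V_\lambda$ of $A$, show each is $B$-invariant, and then invoke the minimal-polynomial criterion to see that $B|_{V_\lambda}$ is diagonalizable; the union of eigenbases of the restrictions does the job. The paper instead argues by induction on $n$: it first produces a single common eigenvector $w$ (by taking an eigenvector $v$ of $A$, observing that the $B$-invariant subspace $W = \operatorname{span}(v, Bv, \dots, B^n v)$ lies inside an eigenspace of $A$ and contains an eigenvector of $B$), then splits off $w$ and applies the inductive hypothesis to the complementary $(n-1)\times(n-1)$ blocks. Note that the paper's step ``one can choose a vector $w\in W$ which is an eigenvector of $B$'' is really the same restriction lemma you prove via minimal polynomials, so the two arguments share their one nontrivial ingredient; your version just applies it to all eigenspaces simultaneously rather than one vector at a time. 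Your approach buys a cleaner assembly (no need to exhibit an invariant complement to $w$, which the paper's change-of-basis step glosses over), while the paper's induction keeps everything at the level of single vectors and explicit matrices. Both work over an arbitrary field, since diagonalizability of $A$ and $B$ is hypothesized rather than derived from algebraic closure.
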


\begin{proof}
Let $V$ be the vector space $k^n$.
It is enough to show that $A$ and $B$ share the same eigenvectors. 

\textbf{Claim 1:}
$A$ and $B$ share at least one eigenvector.\newline
\textbf{Proof of Claim 1:} 
Let $v$ be an eigenvector of $A$ with eigenvalue $\lambda$,
then
\[
  ABv = BAv = B\lambda v = \lambda Bv
\]
i.e. $Bv$ is also an eigenvector of $A$ with eigenvalue $\lambda$.

Let $W$ be the subspace spanned by
\[
  v, Bv,\cdots, B^nv
\]
Then, $W$ is invariant under $B$. As $V$ has a basis by eigenvectors of $B$, one
can choose a vector $w \in W$ which is an eigenvector of $B$. Then, from the 
construction of $W$, $w$ is also an eigenvector of $A$.
\textbf{End}

Let $w$ be as above, with $Bw = \mu w$;
Let $e_1,\cdots,e_n$ be the standard basis of $V$; 
Let $V^{\prime}$ be the subspace spanned by $e_1,\cdots,e_{n-1}$;
Let $\phi: V \rightarrow V$ be the linear map such that $\phi(e_n) = w$.

\begin{align*}
  \inverse{\phi}\circ A\circ \phi & = A^{\prime}\oplus \text{Diag}(\lambda) \\
  \inverse{\phi}\circ B\circ \phi & = B^{\prime}\oplus \text{Diag}(\mu) 
\end{align*} 
where $A^{\prime}$ and $B^{\prime}$ are $n-1\times n-1$ submatrices of $A$ and 
$B$, representing the restriction of $A$ and $B$ on $V^{\prime}$. 

Now, $A^{\prime}$ and $B^{\prime}$ are diagonlizable, and they commute, therefore, by 
inducting on the size of the matrix, we are done.   
\end{proof}

\newpage

\bibliography{_ref}
\bibliographystyle{plain}

\end{document}